\documentclass[10pt,a4paper,reqno]{amsart}
\usepackage[utf8]{inputenc}
\usepackage[T1]{fontenc}       
\usepackage{ae}   
\usepackage{amsfonts}         
\usepackage{amsmath}
\usepackage{amsthm}
\usepackage{amssymb}
\usepackage{mathtools}
\usepackage{calc}
\usepackage{centernot}
\usepackage{color}
\usepackage[pdftex]{hyperref}
\usepackage{mathrsfs}

\usepackage{xargs}

\usepackage{todonotes}

\newcommandx{\huom}[2][1=]{\todo[linecolor=red,backgroundcolor=red!10,bordercolor=red,#1]{#2}}

\newcommand{\define}[1]{{\em #1\/}}

\newcommand{\ve}{\varepsilon}
\newcommand{\N}{\mathbb{N}}

\newcommand{\R}{\mathbb{R}}

\newcommand{\abs}[1]{\lvert #1 \rvert}
\newcommand{\norm}[1]{\lVert #1 \rVert}

\newcommand{\ang}[1]{\langle #1 \rangle}

\newcommand{\p}{\partial}

\DeclareMathOperator\diam{diam}
\DeclareMathOperator\dist{dist}
\DeclareMathOperator\dv{div}

\newcommand{\pinf}{\partial_{\infty}}

\newcommand{\nb}{\bar{\nabla}}
\newcommand{\n}{\nabla}

\DeclareMathOperator\Ric{Ric}
\newcommand{\tr}{\text{tr }}

\DeclareMathOperator\Hess{Hess}
\DeclareMathOperator\vol{vol}
\DeclareMathOperator\spt{spt}

\numberwithin{equation}{section}

\theoremstyle{plain}
\newtheorem{thm}{Theorem}[section]
\newtheorem{lem}[thm]{Lemma}
\newtheorem{cor}[thm]{Corollary}

\theoremstyle{definition}

\newtheorem{exa}[thm]{Example}

\begin{document}

\title{Dirichlet problem for $f$-minimal graphs}

\author{Jean-Baptiste Casteras}
\address{J.-B. Casteras, Departement de Mathematique
Universite libre de Bruxelles, CP 214, Boulevard du Triomphe, B-1050 Bruxelles, Belgium}
\email{jeanbaptiste.casteras@gmail.com}

\author{Esko Heinonen}
\address{E. Heinonen, Department of Mathematics and Statistics, P.O.B. 68 (Gustaf 
H\"allstr\"omin katu 2b), 00014 University
of Helsinki, Finland.}
\email{ea.heinonen@gmail.com}

\author{Ilkka Holopainen}
\address{I.Holopainen, Department of Mathematics and Statistics, P.O.B. 68 (Gustaf 
H\"allstr\"omin katu 2b), 00014 University
of Helsinki, Finland.}
\email{ilkka.holopainen@helsinki.fi}

\thanks{J.-B.C. supported by MIS F.4508.14 (FNRS)}
\thanks{E.H. supported by Jenny and Antti Wihuri Foundation.}

\subjclass[2000]{Primary 58J32; Secondary 53C21}
\keywords{Mean curvature equation, Dirichlet problem, Hadamard manifold}


\begin{abstract}
We study the asymptotic Dirichlet problem for $f$-minimal graphs in Cartan-Hadamard manifolds $M$. $f$-minimal hypersurfaces are natural generalizations of 
self-shrinkers which play a crucial role in the study of mean curvature flow. In the first part of this paper, we prove the existence of $f$-minimal 
graphs with prescribed boundary behavior on a bounded domain $\Omega \subset M$ under suitable assumptions on $f$ and the boundary of $\Omega$. In the second part, we consider the asymptotic Dirichlet problem. Provided that $f$ decays fast enough, we construct solutions to the problem. Our assumption on the decay of $f$ is linked with the sectional curvatures of $M$. In view of a result of Pigola, Rigoli and Setti, our results are almost sharp. 
\end{abstract}

\maketitle

\tableofcontents

\section{Introduction}
In this paper we study the Dirichlet problem for the so-called $f$-minimal graph equation on a complete non-compact 
$n$-dimensional Riemannian manifold $M$ with the Riemannian metric given by $ds^2 = \sigma_{ij}dx^i dx^j$ in local coordinates. 
We equip $N=M\times\R$ with the product metric $ds^2 +dt^2$ and assume that $f\colon N\to\R$ is a smooth function. The Dirichlet
problem for $f$-minimal graphs is to find a solution $u$ to the equation
\begin{equation} \label{mingrapheq}
 \begin{cases}
  \dv \dfrac{\nabla u}{\sqrt{1+|\nabla u|^2}} = \ang{\nb f,\nu} \quad \text{in } \Omega \\
  u|\p\Omega = \varphi,
 \end{cases}
\end{equation}
where $\Omega \subset M$ is a bounded domain, $\bar{\n}f$ is the gradient of $f$ with respect to the product Riemannian metric, and $\nu$ denotes the downward unit normal to the graph of $u$, i.e.
 \begin{equation}\label{unitnormal}
     \nu = \frac{(\nabla u,-1)}{\sqrt{1+|\nabla u|^2}}.
 \end{equation}
The regularity assumptions on $f,\ \p\Omega$, and on $\varphi$ will be specified in due course.
 
The equation \eqref{mingrapheq} can be written in non-divergence form as
 \begin{equation}
  \frac{1}{W} \left( \sigma^{ij} - \frac{u^i u^j}{W^2} \right) u_{i;j} = \ang{\nb f, \nu},
 \end{equation}
where $W= \sqrt{1+|\nabla u|^2}$, $(\sigma^{ij})$ stands for the inverse matrix of $(\sigma_{ij}),\ u^i = \sigma^{ij}u_j$,
with $u_j =\partial u/\partial x^j$,
and $u_{i;j}=u_{ij}-\Gamma^{k}_{ij}u_k$ denotes the second order covariant derivative of $u$.

We recall that an immersed hypersurface $\Sigma$ of a Riemannian manifold $(N,g)$ is called an $f$-minimal hypersurface if its (scalar)
mean curvature $H$ satisfies an equation
\[
H= \ang{\nb f,\nu}
\] 
at every point of $\Sigma$. Here, too, $\nu$ is a unit normal vector field along $\Sigma$, $f$ is a smooth function on $N$, and $\bar{\n}f$ denotes its 
gradient with respect to the Riemannian metric $g$. Hence the graph of a solution $u$ of \eqref{mingrapheq} is an $f$-minimal hypersurface in 
$M\times\R$. Note that we define the mean curvature as the trace of the second fundamental form. Other examples of $f$-minimal hypersurfaces are
\begin{itemize}
\item[(a)] minimal hypersurfaces if $f$ is identically constant,
\item[(b)] self-shrinkers in $\R^{n+1}$ if $f(x)=\abs{x}^2/4$,
\item[(c)] minimal hypersurfaces of weighted manifolds $M_f=\bigl(M,g,e^{-f}d\vol_{M}\bigr)$, where $(M,g)$ is a complete Riemannian manifold with the 
Riemannian volume element $d\vol_{M}$,
\item[(d)] translating solitons of the mean curvature flow in $M\times\R$ if $f(x,t) = -t$.
\end{itemize}
We refer to \cite{ColMin_CMH}, \cite{ColMin_Annals}, \cite{cheng2012eigenvalue}, \cite{cheng2013simons}, \cite{cheng2012stability}, \cite{impera_rimoldi},
and references therein for recent studies on self-shrinkers and $f$-minimal hypersurfaces.
Let us just point out a recent result relevant to our paper. Wang in \cite{MR2780753} investigated graphical self-shrinkers in $\R^n$ by studying the 
equation \eqref{mingrapheq} in the whole $\R^n$ when $f(x)= |x|^2 /4$. She proved that any smooth solution to this equation has to be a hyperplane 
improving an ealier result of Ecker and Huisken \cite{MR1025164}, where they made the extra assumption that the solution has polynomial growth. We will show that the situation is quite different when $\R^n$ is replaced by a Cartan-Hadamard manifold with strictly negative sectional curvatures and for more general $f$ satisfying some suitable assumptions. In particular, we impose that $\sup_{\bar{\Omega}\times \R}\abs{\bar{\n}f}<\infty$ which is not valid for $f(x)=\abs{x}^2/4$.

In our existence results we always assume that $f\in C^{2}(\bar{\Omega}\times\R)$ is of 
the form
\begin{equation}\label{f-form}
f(x,t)=m(x)+r(t),
\end{equation}
for discussion about this, see Section \ref{int_grad_sec}.
Our first result is the following:
\begin{thm} 
\label{existence}
Let $\Omega\subset M$ be a bounded domain with $C^{2,\alpha}$ boundary $\partial \Omega$. Suppose that $f\in C^{2}(\bar{\Omega}\times\R)$ satisfies \eqref{f-form}, with
\[
F =\sup_{\bar{\Omega}\times \R}\abs{\bar{\n}f}<\infty,\quad
\Ric_{\Omega} \geq -\dfrac{F^2}{n-1},\quad\text{and}\quad
H_{\partial \Omega} \ge F,
\]
where $Ric_{\Omega}$ stands for the Ricci curvature of $\Omega$ and 
$H_{\partial \Omega}$ for the inward mean curvature of $\partial \Omega$. Then, for all $\varphi \in C^{2,\alpha}(\partial \Omega)$,  
there exists a solution 
$u\in C^{2,\alpha}(\bar{\Omega})$ to the equation \eqref{mingrapheq} with boundary values $\varphi$.
\end{thm}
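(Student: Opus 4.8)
The plan is to solve the Dirichlet problem \eqref{mingrapheq} by the classical continuity method (Leray--Schauder), so the heart of the matter is establishing \emph{a priori} $C^{1,\alpha}(\bar\Omega)$ estimates for solutions; once the gradient is bounded up to the boundary, the equation becomes uniformly elliptic with H\"older coefficients and standard Schauder theory together with the method-of-continuity bootstrap yields the $C^{2,\alpha}$ solution. I would set up a one-parameter family of Dirichlet problems, e.g.\ $\dv\bigl(\nabla u/\sqrt{1+|\nabla u|^2}\bigr)=\tau\ang{\nb f,\nu}$ with boundary data $\tau\varphi$ (or some equivalent homotopy to a problem with known solution $u\equiv 0$ when $\tau=0$), verify that the set of $\tau$ for which a solution exists is open (implicit function theorem, using invertibility of the linearized operator, which is a uniformly elliptic operator with no zeroth-order term of the wrong sign) and closed (the \emph{a priori} estimates), hence all of $[0,1]$.

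The three quantitative hypotheses are exactly what is needed for the three standard \emph{a priori} bounds. First, the height (sup-norm) estimate: since $f$ has the split form \eqref{f-form} and $F=\sup|\nb f|<\infty$, the function $\ang{\nb f,\nu}$ is bounded by $F$, and the hypothesis $\Ric_\Omega\ge -F^2/(n-1)$ lets one build a bounded supersolution/barrier (comparing with distance-type functions or using that the operator applied to suitable radial comparison functions dominates $F$), giving $\sup_\Omega|u|\le\sup_{\p\Omega}|\varphi|+C$. Second, the boundary gradient estimate: here the condition $H_{\p\Omega}\ge F$ on the inward mean curvature of $\p\Omega$ is the crucial geometric assumption — one constructs upper and lower barriers of the form $\varphi$-extension plus $\psi(\dist(\cdot,\p\Omega))$ in a tubular neighborhood of $\p\Omega$, and the computation of the graph operator on such a barrier produces a term involving $H_{\p\Omega}-F\ge 0$ with the right sign, so the barrier works; this pins $|\nabla u|$ on $\p\Omega$. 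Third, the interior (global) gradient estimate: differentiate the equation, apply the maximum principle to a function like $|\nabla u|^2$ or $\log\sqrt{1+|\nabla u|^2}$ times a cutoff, using the Ricci lower bound again to control the Bochner terms and $|\bar\nabla f|$, $|\Hess f|$ bounds from $f\in C^2(\bar\Omega\times\R)$ to control the terms coming from the right-hand side; combined with the boundary gradient bound this gives $\sup_\Omega|\nabla u|\le C$.

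Once $\sup_\Omega(|u|+|\nabla u|)\le C$, the equation is uniformly elliptic with bounded measurable coefficients, so De Giorgi--Nash--Moser gives an interior $C^{1,\alpha}$ bound, and a boundary version (using the $C^{2,\alpha}$ regularity of $\p\Omega$ and the boundary gradient estimate) upgrades this to $C^{1,\alpha}(\bar\Omega)$; Schauder estimates then bootstrap to $C^{2,\alpha}(\bar\Omega)$, and the Leray--Schauder fixed point theorem (or the method of continuity) closes the argument. I expect the main obstacle to be the \emph{a priori} gradient estimates: the boundary estimate must be engineered so that the curvature condition $H_{\p\Omega}\ge F$ enters with exactly the right sign in the barrier computation (this is where the hypothesis is used essentially, and where $f$ being large would break things), and the global gradient estimate requires a careful Bochner-type computation on the graph where the Ricci bound $\Ric_\Omega\ge -F^2/(n-1)$ must absorb the negative contributions; the height estimate and the functional-analytic continuity argument are comparatively routine. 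A secondary technical point is that all barriers and comparison functions should be built from the distance function to $\p\Omega$ (smooth in a collar by the $C^{2,\alpha}$ assumption) and from exponential/radial comparison functions governed by the Ricci or sectional curvature bound.
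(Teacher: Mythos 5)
Your overall architecture matches the paper's: set up a Leray--Schauder homotopy, reduce to \emph{a priori} $C^1$ bounds via height, boundary gradient, and interior gradient estimates, and close with Schauder. You also correctly identify which hypothesis is the geometric heart of the boundary gradient estimate. However, there is one genuine gap and one misattribution that together hide where the Ricci bound actually does its work.

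The missing ingredient is a propagation lemma (Lemma~\ref{meancurvlemma} in the paper): the hypotheses $\Ric_\Omega\geq -F^2/(n-1)$ and $H_{\partial\Omega}\geq F$ are not used separately as you describe; they are combined, via a Riccati comparison along normal geodesics, to show that the inward mean curvature $H(x)$ of \emph{every} level set $\{d(\cdot,\partial\Omega)=t\}$ in $\Omega_0$ stays $\geq F$. One writes the Riccati equation $S_t'=S_t^2+R_t$ for the shape operator, takes traces, and uses $H(0)=H_{\partial\Omega}\geq F$ together with the Ricci lower bound to conclude $H'(t)\geq 0$. This pointwise inequality $H(x)\geq F$ throughout the collar is exactly what makes the barrier computation $\tilde Q[\psi]=h''/W^3 - h'H(x)/W - b < 0$ close (for both the height estimate and the boundary gradient estimate), because $|b|\leq F\leq H(x)$. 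Your proposal instead says the Ricci bound is used directly to build a radial supersolution for the height bound, and \emph{again} in a Bochner argument for the interior gradient estimate. The latter is incorrect: the paper's interior gradient bound (Korevaar--Simon style, maximizing $\eta W$) does not require any Ricci lower bound — the full Riemann tensor appears only through a compactness constant. So the Ricci hypothesis has a single, precise job (feeding the Riccati comparison), and without stating that lemma your barrier computations for the height and boundary estimates would only have $H_{\partial\Omega}\geq F$ \emph{on} $\partial\Omega$, which does not by itself give the sign you need on the interior level sets.

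Everything else — the form of the homotopy $\tau\mapsto (\tau\ang{\bar\nabla f,\nu}, \tau\varphi)$, the use of $C^{2,\alpha}$ regularity of $\partial\Omega$ to get a smooth collar for the distance function, and the Schauder/Leray--Schauder bootstrap once $\|u\|_{C^1(\bar\Omega)}$ is bounded uniformly in $\tau$ — agrees with the paper. The one other subtlety worth flagging is that the comparison principle must be applied not to $Q$ itself (whose right-hand side depends on $\nu$ and hence on the solution in a way that does not satisfy the structure conditions of the comparison theorem), but to a frozen operator $\tilde Q$ with $b=\ang{\bar\nabla f(\cdot,u),\nu}$ treated as a fixed coefficient; the paper makes this explicit.
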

The proof of Theorem \ref{existence} is based on the Leray-Schauder method  
(see \cite[Theorem 13.8]{GilTru}), and hence requires a priori height and gradient (both 
interior and boundary) estimates for solutions.
It is worth noting already at this point that we cannot ask for the uniqueness of a 
solution if the function $f\colon M\times\R\to\R$ depends on the $t$-variable since 
comparison principles fail to hold. Indeed, an easy computation shows that for the open 
disk $B(0,2)\subset\R^2$ and $f\colon \R^2\times\R\to\R,\ f(x,t)=|(x,t)|^2/4$, both 
the upper and lower hemispheres and the disk $B(0,2)$ itself are $f$-minimal hypersurfaces 
with zero boundary values on the circle $\partial B(0,2)$. 

Thanks to the interior gradient estimate Lemma~\ref{globestim} we can weaken the regularity assumption on the boundary value function.
\begin{thm} 
\label{cont_existence}
Let $\Omega\subset M$ be a bounded domain with $C^{2,\alpha}$ boundary $\partial \Omega$. 
Suppose that $f\in C^{2}(\bar{\Omega}\times\R)$ satisfies \eqref{f-form}, with
\[
F =\sup_{\bar{\Omega}\times \R}\abs{\bar{\n}f}<\infty,\quad
\Ric_{\Omega} \geq -\dfrac{F^2}{n-1},\quad\text{and}\quad
H_{\partial \Omega} \ge F.
\]
Then, for all $\varphi \in C(\partial \Omega)$,  
there exists a solution 
$u\in C^{2,\alpha}(\Omega)\cap C(\bar{\Omega})$ to the equation \eqref{mingrapheq} with boundary values $\varphi$.
\end{thm}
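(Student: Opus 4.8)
The plan is to obtain the solution in $C^{2,\alpha}(\Omega)\cap C(\bar\Omega)$ by exhausting the continuous boundary data $\varphi$ by smooth data and passing to the limit, using the interior gradient estimate of Lemma~\ref{globestim} to control the solutions locally. First I would approximate $\varphi\in C(\partial\Omega)$ uniformly by a sequence $\varphi_k\in C^{2,\alpha}(\partial\Omega)$, say with $\norm{\varphi_k-\varphi}_{C(\partial\Omega)}\to 0$ and (by replacing $\varphi_k$ with a suitable subsequence) $\norm{\varphi_{k+1}-\varphi_k}_{C(\partial\Omega)}\le 2^{-k}$. For each $k$, Theorem~\ref{existence} produces a solution $u_k\in C^{2,\alpha}(\bar\Omega)$ of \eqref{mingrapheq} with $u_k|\partial\Omega=\varphi_k$; note that the curvature and mean curvature hypotheses here are exactly those of Theorem~\ref{existence}, so it applies verbatim.

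The key step is to promote the uniform convergence of the boundary data to uniform convergence of the $u_k$ on $\bar\Omega$. Since $f$ depends on $t$, a naive comparison principle is unavailable; instead I would use the a priori height estimate underlying Theorem~\ref{existence} in the following differential form. Given two boundary value functions $\varphi,\psi\in C^{2,\alpha}(\partial\Omega)$ with corresponding solutions $u,v$, one should establish the estimate $\sup_{\bar\Omega}\abs{u-v}\le C(\Omega,F)\norm{\varphi-\psi}_{C(\partial\Omega)}$; this would follow by constructing barriers of the form $v+w^{\pm}$, where $w^{\pm}$ are solutions (or appropriate super/subsolutions) of the $f$-minimal graph operator linearized along a path between the two graphs, vanishing on $\partial\Omega$ up to the constant $\norm{\varphi-\psi}_{C(\partial\Omega)}$, and using that $H_{\partial\Omega}\ge F$ provides the geometric room needed for such barriers (the same mechanism that gives the boundary gradient estimate). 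With such an estimate in hand, $\norm{u_{k+1}-u_k}_{C(\bar\Omega)}\le C\,2^{-k}$, so $(u_k)$ is Cauchy in $C(\bar\Omega)$ and converges uniformly to some $u\in C(\bar\Omega)$ with $u|\partial\Omega=\varphi$.

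It remains to show $u\in C^{2,\alpha}(\Omega)$ and that it solves \eqref{mingrapheq} in $\Omega$. Here I would invoke the interior gradient estimate Lemma~\ref{globestim}: on any fixed ball $B(p,2R)$ with $\co{B(p,2R)}\subset\Omega$, the gradients $\abs{\nabla u_k}$ are bounded on $B(p,R)$ in terms of $R$, $\sup_{B(p,2R)}\abs{u_k}$ (which is bounded uniformly in $k$ by the previous step), and the geometry of $M$ and $f$ — all independent of $k$. Consequently the equation \eqref{mingrapheq} is uniformly elliptic on $B(p,R)$ with $C^{1,\alpha}$ bounds independent of $k$; De Giorgi–Nash–Moser / Schauder interior estimates for quasilinear equations (see \cite[Ch.~13]{GilTru}) then give uniform $C^{2,\alpha}(B(p,R/2))$ bounds. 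By Arzelà–Ascoli and a diagonal argument over an exhaustion of $\Omega$, a subsequence of $(u_k)$ converges in $C^2_{\loc}(\Omega)$; the limit must be $u$ by uniqueness of the uniform limit, so $u\in C^{2,\alpha}_{\loc}(\Omega)$ and passes to the limit in the equation, giving a solution of \eqref{mingrapheq} in $\Omega$ with $u|\partial\Omega=\varphi$.

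The main obstacle is the continuous-dependence estimate $\sup_{\bar\Omega}\abs{u-v}\le C\norm{\varphi-\psi}_{C(\partial\Omega)}$ in the absence of a comparison principle: one must replace monotonicity arguments by an explicit barrier construction near $\partial\Omega$, exploiting $H_{\partial\Omega}\ge F$ together with the global structure, much as in the boundary gradient estimate behind Theorem~\ref{existence}. Once this quantitative boundary-data dependence is secured, the interior gradient estimate Lemma~\ref{globestim} and standard elliptic regularity make the limiting procedure routine. (An alternative, if one prefers to avoid quantitative continuous dependence, is a Perron-type argument: take $u$ to be the uniform limit of $u_k$ along a subsequence that converges in $C^2_{\loc}(\Omega)$ by the interior estimates, and then verify $u$ attains the boundary values $\varphi$ continuously by sandwiching $u_k$ between fixed barriers built from $\varphi_k$ near $\partial\Omega$; but controlling the boundary behavior still ultimately rests on the same barrier construction.)
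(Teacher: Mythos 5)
Your argument hinges on a continuous-dependence estimate $\sup_{\bar\Omega}\abs{u-v}\le C\,\norm{\varphi-\psi}_{C(\partial\Omega)}$, which you do not actually prove and which you yourself flag as ``the main obstacle.'' That gap is fatal to the main route: such an estimate would in particular force uniqueness of solutions for fixed boundary data, but precisely because $\ang{\nb f,\nu}$ depends on the solution through $\nu$ (and, when $f$ depends on $t$, through $u$ itself), comparison principles are not available for the operator $Q$, and the paper explicitly points out that uniqueness can fail. So the estimate you would need is not something one can simply assert, and the barrier sketch you give for it (linearizing along a path between the two graphs) is a heuristic, not a construction.

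The paper's proof avoids the issue by never comparing two $f$-minimal solutions directly. It introduces auxiliary problems $a^{ij}(x,\n v)v_{i;j}+F^{\pm}=0$ with \emph{constant} right-hand side $F^{\pm}=\pm\sup_{\bar\Omega\times\R}\abs{\nb f}$, where the zeroth-order data does not depend on the unknown, so \cite[Theorem 10.1]{GilTru} does apply. With two monotone $C^{2,\alpha}$ approximations $\varphi^{\pm}_k\to\varphi$ from above and below, one gets solutions $u^{\pm}_k$ of the $f$-minimal problem and auxiliary solutions $v^{\pm}_k$ with the same boundary data; the comparison principle (applied only to the constant-RHS operators) yields the monotone chain $v_1^-\le\cdots\le v_\ell^-\le v_{k}^+\le\cdots\le v_1^+$ and the sandwich $v_k^-\le u_k^{\pm}\le v_k^+$. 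This gives the uniform $C^0$ bound (replacing your Cauchy argument) and, more importantly, the boundary attainment: $v^{\pm}$ converge to $\varphi$ on $\partial\Omega$, so the squeezed limit $u$ does too. Your final paragraph's Perron-type alternative is in spirit closer to this, but it still misses the key device of comparing against solutions of a constant-RHS equation rather than against the $f$-minimal equation itself. Once the uniform bound and boundary behavior are secured, the remaining interior steps you describe (Lemma~\ref{globestim}, local Schauder estimates, Arzel\`a--Ascoli with a diagonal argument) coincide with the paper's.
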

Let us point out that the assumption $H_{\partial \Omega}\geq F$ is necessary. Indeed, Serrin \cite{MR0282058} has proved that the constant mean 
curvature equation
\[
\dv \frac{\nabla u}{W}=H_0
\] 
is solvable on a bounded domain $\Omega\subset\R^n$ if and only if $H_{\partial \Omega} \geq  |H_0|$;
see also \cite{MR2373228} for a related result.
If the function $f$ were a density function defined only on $M$, it might be possible to refine the assumptions
in terms of mean convexity of the boundary with respect to the weighted mean curvature and 
Bakry-Emery-Ricci tensor. 
However, in our case the function $f$ depends also on the $\R$-variable and for the a priori estimates it is
necessary to have a control of the full gradient.


Finally in Section~\ref{DPat8}, we consider the Dirichlet problem at infinity. Here we suppose that $M$ is a  
Cartan-Hadamard manifold, i.e. a complete, simply connected Riemannian manifold with non-positive sectional curvature.
We denote by $\bar{M}$ the compactification of $M$ in the cone topology
(see \cite{eberlein1973visibility}) and by $\partial_\infty M$ the asymptotic boundary of $M$. The
Dirichlet problem at infinity consists in finding solutions to \eqref{mingrapheq} in the case where $\Omega =M$ and $\partial \Omega =\partial_\infty M$. 
In order to formulate the assumptions on sectional curvatures of $M$ and on the function $f\colon M\times\R\to\R$, we first
denote by $\rho(\cdot)=d(o,\cdot)$ the (Riemannian) distance to a fixed point $o\in M$. Then we assume that sectional curvatures of $M$ satisfy
     \begin{equation}\label{curv_assump_gen}
       -(b\circ\rho)^2(x)\le K(P_x)\le -(a\circ\rho)^2(x)
     \end{equation}
   for all $x\in M$ and all $2$-dimensional subspaces $P_x\subset T_xM$, where $a$ and $b$ are smooth functions subject to conditions
   \eqref{A1}-\eqref{A7}; see Section~\ref{DPat8}. Given a smooth function 
$k\colon [0,\infty)\to [0,\infty)$, we denote by $f_k \colon [0,\infty) \to \R$
the smooth non-negative solution to the initial value problem
    \begin{equation}\label{Jacobi_eq}
      \left\{
	\begin{aligned}
	  f_k(0)&=0, \\
	  f_k'(0)&=1, \\
	  f_k''&=k^2f_k.
	\end{aligned}
      \right.
    \end{equation} 
To state the main result on the solvability of the asymptotic Dirichlet problem requires a number of definitions. First of all we assume that there exists an auxiliary smooth function $a_0\colon [0,\infty)\to (0,\infty)$ such that 
\[
 \int_1^\infty \left( \int_r^\infty \frac{ds}{f_a^{n-1}(s)} \right) a_0(r)f_a^{n-1}(r) dr <\infty.
\]    
Then we define $g\colon [0,\infty)\to [0,\infty)$ by
\begin{equation}\label{gV}
g(r)=\frac{1}{f_a^{n-1}(r)}\int_0^r a_0(t)f_a^{n-1}(t) dt.
\end{equation}
The function $g$ was introduced in \cite{mastrolia2015elliptic} where they studied some elliptic and parabolic equations with asymptotic Dirichlet 
boundary conditions on Cartan-Hadamard manifolds.
In addition to \eqref{f-form}, we assume that the function $f\in C^{2}(\bar{\Omega}\times\R)$ 
satisfies
\begin{equation}\label{f1}
\sup_{\partial B(o,r)\times\R}\abs{\nb f} \le 
\min\left\lbrace \frac{a_0(r)+(n-1)\frac{f^\prime_a (r)}{f_a(r)}g^3(r)}{\bigl(1+g^2(r)\bigr)^{3/2}}, (n-1)\frac{f^\prime_a(r)}{f_a(r)}\right\rbrace,
\end{equation}
for every $r>0$,    
and
\begin{equation}
\label{f2}
\sup_{\partial B(o,r)\times\R}\abs{\nb f}=o\left(\frac{f^{\prime}_a (r)}{f_a(r)}r^{-\ve-1}\right)
\end{equation} 
for some $\epsilon>0$ as $r\to\infty$.

The general solvability result for the asymptotic Dirichlet problem is the following. 
\begin{thm}\label{ThmMain}
   Let $M$ be a Cartan-Hadamard manifold of dimension $n\ge 2$. 
Assume that
     \begin{equation*}
       -(b\circ\rho)^2(x)\le K(P_x)\le -(a\circ\rho)^2(x)
     \end{equation*}
for all $x\in M$ and all $2$-dimensional subspaces $P_x\subset T_xM$ where $a$ and $b$ 
satisfy assumptions \eqref{A1}-\eqref{A7} and that the function $f\in C^2(M\times\R)$ on the right side of \eqref{mingrapheq} satisfies \eqref{f-form}, \eqref{f1}, and \eqref{f2}. 
   Then the asymptotic Dirichlet problem for the equation \eqref{mingrapheq} is
  solvable for any boundary data $\varphi\in C\bigl(\partial_{\infty}M\bigr)$.
\end{thm}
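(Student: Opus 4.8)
The plan is to exhaust $M$ by geodesic balls $B_k=B(o,k)$, solve the Dirichlet problem on each $B_k$, extract a subsequential limit, and then control the boundary behaviour of the limit at $\partial_\infty M$ by barriers built from the function $g$ of \eqref{gV}; below, $W$ always stands for $\sqrt{1+\abs{\nabla v}^2}$ with $v$ the function under consideration. Extend $\varphi$ to $\co\varphi\in C(\bM)$. Since $K\le-(a\circ\rho)^2$, the Laplacian comparison theorem gives $\Delta\rho\ge(n-1)f_a'(\rho)/f_a(\rho)$, so the inward mean curvature of $\partial B_k$ is at least $(n-1)f_a'(k)/f_a(k)$, which by the second entry of the minimum in \eqref{f1} is at least $\sup_{\partial B_k\times\R}\abs{\nb f}$; moreover $F_k:=\sup_{\co B_k\times\R}\abs{\nb f}<\infty$. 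Hence the boundary gradient estimate and the interior gradient estimate of Lemma~\ref{globestim} are available on $B_k$, and together with the uniform height bound obtained below, the Leray--Schauder method (see the proof of Theorem~\ref{existence}, $B_k$ having pinched curvature \eqref{curv_assump_gen}) yields $u_k\in C^{2,\alpha}(B_k)\cap C(\co B_k)$ solving \eqref{mingrapheq} in $B_k$ with $u_k|\partial B_k=\co\varphi$.

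For the height bound, note that $\dv(\nabla u_k/W)=\ang{\nb f,\nu}$ and $\abs{\ang{\nb f,\nu}}\le\abs{\nb f}\le F(\rho)$, where $F(\rho(x)):=\sup_{\partial B(o,\rho(x))\times\R}\abs{\nb f}$; thus $u_k$ is at once a subsolution of $\dv(\nabla v/W)=-F(\rho)$ and a supersolution of $\dv(\nabla v/W)=F(\rho)$, and since these prescribed--mean--curvature equations do not involve $v$, the comparison principle applies to them, sidestepping the failure of uniqueness recalled in the Introduction. The radial function $\Phi(r)=\int_r^\infty g(s)\,ds$ is finite by the integrability hypothesis preceding \eqref{gV}, and using $g'+(n-1)(f_a'/f_a)g=a_0$, $\Delta\rho\ge(n-1)f_a'/f_a$ and \eqref{f1} one checks that $x\mapsto\Phi(\rho(x))$ is a supersolution of $\dv(\nabla v/W)=-F(\rho)$ and $x\mapsto-\Phi(\rho(x))$ a subsolution of $\dv(\nabla v/W)=F(\rho)$. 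Comparing $u_k$ on $B_k$ with $\pm\bigl(\sup_{\bM}\abs{\co\varphi}+\Phi(\rho)\bigr)$ gives $\sup_M\abs{u_k}\le C:=\sup_{\bM}\abs{\co\varphi}+\Phi(0)$, independently of $k$. With this bound, Lemma~\ref{globestim} furnishes a gradient bound for $u_k$ on each fixed ball $B_\ell$ ($k>\ell$), interior Schauder estimates then bound $\{u_k\}$ in $C^{2,\alpha}(B_\ell)$, and a diagonal argument yields a subsequence converging in $C^2_{\loc}(M)$ to a solution $u\in C^{2,\alpha}(M)$ of \eqref{mingrapheq}.

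It remains to show $u\to\varphi$ at infinity, which is the main obstacle. Fix $x_0\in\partial_\infty M$ and $\ve>0$; the goal is a supersolution of $\dv(\nabla v/W)=-F(\rho)$ on $M\setminus B(o,R)$, $R$ large, of the form
\[
\Psi^+(x)=\varphi(x_0)+\ve+A\,\Phi(\rho(x))+C\,\psi(x),
\]
where $A,C>0$ and $\psi$ is a nonnegative Lipschitz function on $\bM$ built from the angular distance to $x_0$ and the lower curvature bound $K\ge-(b\circ\rho)^2$, with $\psi(x_0)=0$ and $\psi\ge\kappa>0$ outside a cone neighbourhood of $x_0$. On $\{\rho>R\}$ the quantity $g$ is small, so $A\,\Phi(\rho)$ still contributes a supersolution for every $A\ge1$; the first--order contribution of $\nabla\psi$ along the geodesic spheres is absorbed by the second entry of the minimum in \eqref{f1}, and the decay \eqref{f2}, $\abs{\nb f}=o\bigl((f_a'/f_a)\,r^{-\ve-1}\bigr)$, makes the remaining error terms negligible, so $\Psi^+$ is indeed a supersolution for $R$ large. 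Choosing first $R$, then $A$ with $A\,\Phi(R)\ge C$, and $C$ with $C\kappa\ge\osc_{\bM}\co\varphi$, one obtains $\Psi^+\ge u_k$ on $\partial B(o,R)$ (as $\sup_M\abs{u_k}\le C$) and on $\partial B_k$ (using $\psi\ge\kappa$ away from $x_0$ and the continuity of $\co\varphi$ at $x_0$ near it); comparison on $B_k\setminus B(o,R)$ gives $u_k\le\Psi^+$ there, hence $u\le\Psi^+$ on $M\setminus B(o,R)$, and letting $x\to x_0$ (so $\Phi(\rho(x))\to0$ and $\psi(x)\to0$) yields $\limsup_{x\to x_0}u(x)\le\varphi(x_0)+\ve$. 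A symmetric subsolution $\Psi^-$ gives the reverse inequality, so $u$ extends continuously to $\bM$ with $u|\partial_\infty M=\varphi$. The genuinely delicate point throughout is the construction of $\psi$ and the verification that $\Psi^{\pm}$ are super/subsolutions — balancing the radial gain from $g$ against the angular errors — which is exactly what the hypotheses \eqref{f1}, \eqref{f2} on $f$, together with \eqref{curv_assump_gen} and the structural conditions \eqref{A1}--\eqref{A7} on $a,b$, are tailored to make possible.
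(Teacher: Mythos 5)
Your overall strategy mirrors the paper's closely, and several of your steps are essentially identical to what the authors do. Solving on the exhaustion $B_k=B(o,k)$ uses exactly the second entry of the minimum in \eqref{f1} together with Laplacian comparison, which is the content of Lemma~\ref{adp-gb}. Your height bound is also the paper's: with $\Phi(r)=\int_r^\infty g(s)\,ds$ one has $\Phi = V - \|\varphi\|_\infty$ (since $V'=-g$), and the computation you sketch — using $g'+(n-1)(f_a'/f_a)g=a_0$ and Laplacian comparison to land precisely on the first entry of \eqref{f1} — is the proof of Lemma~\ref{UniformBoundLem}. The passage to a $C^2_{\loc}$ limit via Lemma~\ref{globestim}, interior Schauder, and a diagonal argument is likewise the same.

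The genuine gap is in the boundary-value step, which you flag as "the genuinely delicate point" but then do not carry out, and which is precisely where the paper does its real work. Two concrete problems. First, your angular function $\psi$ is declared only Lipschitz; but the argument requires computing $\dv(\nabla\Psi^\pm/W)$ classically and applying the comparison principle for a second-order elliptic operator, so $\psi$ must be $C^2$ with quantitative control on both $\nabla\psi$ and $\Hess\psi$. The paper achieves this via a non-trivial smoothing of the crude cut-off \eqref{eq:hoodeftilde} — a convolution scaled by $b\circ\rho$ — yielding $h\in C^\infty(M)\cap C(\bar M)$ with the estimates $\abs{\nabla h}\lesssim 1/f_a(\rho)$ and $\norm{\Hess h}\lesssim (b\circ\rho)/f_a(\rho)$ of Lemma~\ref{arvio_lause} and, via \eqref{A7}, the sharper $\norm{\Hess h}\lesssim\rho^{-C_4-1}f_a'(\rho)/f_a(\rho)$ of Lemma~\ref{perusta}; this is exactly where the lower curvature bound and conditions \eqref{A1}--\eqref{A7} enter. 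Second, and more importantly, you assert but do not verify that $\Psi^\pm$ are super/subsolutions. The verification is the heart of Lemma~\ref{PsiBarrierLemma}: the negative radial contribution (of order $\rho^{-\delta-2}\cdot\rho f_a'/f_a$ for the paper's choice $\rho^{-\delta}$) must dominate both the Hessian error from the angular part (order $\rho^{-C_4-2}\cdot\rho f_a'/f_a$, forcing $\delta<C_4$) and the $F$-term (which by \eqref{f2} is $o(\rho^{-\ve-2}\cdot\rho f_a'/f_a)$, forcing $\delta<\ve$), plus the cross terms in $\ang{\nabla\abs{\nabla\psi}^2,\nabla\psi}$. None of these cancellations is checked in your proposal; phrases like ``absorbed by the second entry of the minimum in \eqref{f1}'' do not point at the right mechanism (that entry plays no role in the supersolution estimate), and the order in which you fix $R$, $A$, $C$ does not obviously guarantee the supersolution property, since the Hessian error scales with $C$ while the radial gain scales with $A$. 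In short, the skeleton is right, but what you defer is precisely the proof.
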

As a special case of the above theorem, we have:
\begin{cor}\label{thm1}
   Let $M$ be a Cartan-Hadamard manifold of dimension $n\ge 2$.  Suppose that there are constants
 $\phi>1,\ \varepsilon>0$, and $R_0>0$ such that
     \begin{equation}\label{curv_ass_minim}
       -\rho(x)^{2\left(  \phi-2\right)  -\varepsilon}\leq K(P_x)\leq-\dfrac
       {\phi(\phi-1)}{\rho(x)^{2}},
     \end{equation}
for all $2$-dimensional subspaces $P_x\subset T_{x}M$ and for all $x\in M$, with $\rho(x)\ge R_0$. Assume, furthermore, that 
 $f\in C^2(M\times\R)$ satisfies \eqref{f-form}, \eqref{f1}, and \eqref{f2}, with $f_a(t)=t$ for small $t\ge 0$ and $f_a(t)=c_1t^\phi +c_2t^{1-\phi}$ for $t\ge R_0$. 
 Then the asymptotic Dirichlet problem for equation \eqref{mingrapheq} is
   solvable for any boundary data $\varphi\in C\bigl(\partial_{\infty}M\bigr)$.
\end{cor}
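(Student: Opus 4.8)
The plan is to recognize that Corollary~\ref{thm1} is nothing but the specialization of Theorem~\ref{ThmMain} to an explicit model pair $(a,b)$, so the whole content is to check that this pair satisfies the structural hypotheses \eqref{A1}--\eqref{A7}. The corollary already prescribes $a$ implicitly through the stated form of $f_a$: since $f_a''=a^2f_a$, we have $a\equiv 0$ on the interval where $f_a(t)=t$ (legitimate on a Cartan--Hadamard manifold, where $K\le 0$ near $o$), while on $t\ge R_0$ the relation $f_a(t)=c_1t^{\phi}+c_2t^{1-\phi}$ forces $f_a''=\phi(\phi-1)t^{-2}f_a$, i.e. $a(t)=\sqrt{\phi(\phi-1)}\,t^{-1}$. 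For the lower curvature bound I would take $b$ to be any positive smooth function on $[0,\infty)$ with $b(t)=t^{\phi-2-\varepsilon/2}$ for $t\ge R_0$ and $b\ge a$ pointwise (which is consistent with \eqref{curv_ass_minim}). With these choices $-(a\circ\rho)^2(x)=-\phi(\phi-1)\rho(x)^{-2}$ and $-(b\circ\rho)^2(x)=-\rho(x)^{2(\phi-2)-\varepsilon}$ for $\rho(x)\ge R_0$, so the pinching \eqref{curv_ass_minim} is exactly \eqref{curv_assump_gen}.

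Next I would record the elementary consequences of this choice that feed into the remaining hypotheses of Theorem~\ref{ThmMain}. The function $f_a$ of \eqref{Jacobi_eq} for the $a$ above is precisely the one named in the statement (the Euler equation on $t\ge R_0$ has general solution $c_1t^{\phi}+c_2t^{1-\phi}$, and $f_a''=a^2f_a\ge 0$ gives $f_a'\ge f_a'(0)=1$, hence $f_a>0$, $f_a$ increasing, and $c_1>0$ after matching $f_a,f_a'$ at $R_0$). In particular $f_a^{n-1}(r)\asymp r^{(n-1)\phi}$ and $f_a'(r)/f_a(r)\asymp r^{-1}$ as $r\to\infty$, which shows that an admissible auxiliary function $a_0$ exists --- e.g. $a_0(r)=(1+r)^{-2-\delta}$ for small $\delta>0$ makes the integral preceding \eqref{gV} converge and yields $g$ with $g(r)\to 0$ --- so that the hypotheses \eqref{f1} and \eqref{f2}, assumed outright in the corollary, are not vacuous and are consistent with the $r^{-\varepsilon-1}$ decay demanded there.

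The substantial step is the verification of the structural conditions \eqref{A1}--\eqref{A7} for this $(a,b)$. These are asymptotic size and derivative bounds on $a$, $b$ and on the ratio $b/a$ (equivalently on $f_a$, $f_b$ and their logarithmic derivatives); for the power-type functions $a(t)\asymp t^{-1}$, $b(t)\asymp t^{\phi-2-\varepsilon/2}$ they reduce to comparisons between powers of $t$, and it is exactly the strict inequality $\phi>1$ together with the positive margin $\varepsilon>0$ in the exponent of $b$ that makes all of them hold. I expect this bookkeeping to be the only real obstacle: it is where the precise numerology of \eqref{curv_ass_minim} is consumed, but no single estimate in it is deep.

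Once \eqref{A1}--\eqref{A7} are checked, the remaining hypotheses of Theorem~\ref{ThmMain}, namely that $f\in C^2(M\times\R)$ satisfies \eqref{f-form}, \eqref{f1}, and \eqref{f2}, are precisely what is assumed in the statement of Corollary~\ref{thm1}. Therefore Theorem~\ref{ThmMain} applies verbatim and yields, for every $\varphi\in C(\partial_\infty M)$, a solution of the asymptotic Dirichlet problem for \eqref{mingrapheq} with boundary data $\varphi$, which is the assertion.
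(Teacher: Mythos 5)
Your proposal takes exactly the paper's route: recognize Corollary~\ref{thm1} as Theorem~\ref{ThmMain} specialized to the model pair $a(t)\approx\sqrt{\phi(\phi-1)}\,t^{-1}$ and $b(t)\approx t^{\phi-2-\varepsilon/2}$, and reduce the proof to checking \eqref{A1}--\eqref{A7}. The paper itself discharges that verification by citing \cite[Ex.\ 2.1, Cor.\ 3.22]{HoVa}, whereas you stop at ``I expect this bookkeeping to work'' -- which is the one place where the hypotheses $\phi>1$ and $\varepsilon>0$ are actually consumed, so the claim deserves more than an expectation. Two of the conditions are worth spelling out because they are not entirely free: \eqref{A7} reads $\lim_{t\to\infty}t^{1+C_4}b(t)/f_a'(t)=0$, and with $f_a'(t)\asymp t^{\phi-1}$, $b(t)\asymp t^{\phi-2-\varepsilon/2}$ this becomes $\lim_{t\to\infty}t^{C_4-\varepsilon/2}=0$, forcing $0<C_4<\varepsilon/2$ (so $\varepsilon>0$ is genuinely needed); and \eqref{A5} asks $b(t)\ge C_3(1+t)^{-Q}$ with $Q\in(0,1)$, which when $b$ is decreasing forces $2-\phi+\varepsilon/2<1$, i.e.\ $\phi>1+\varepsilon/2$ -- a numerical constraint that is implicit in the requirement $b\ge a$ of \eqref{curv_assump_gen} but that your writeup does not surface. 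Once these (and the more routine \eqref{A1}--\eqref{A4}, \eqref{A6}, with $b$ glued monotonically to a constant near $0$) are recorded, or the reference to \cite{HoVa} is inserted, the argument closes; otherwise the proposal leaves exactly the same gap the paper fills by citation.
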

In another special case we assume that sectional curvatures are bounded from above by a negative constant $-k^2$.
\begin{cor}\label{HVkor2_RT}
   Let $M$ be a Cartan-Hadamard manifold of dimension $n\ge 2$. 
   Assume that
     \begin{equation}\label{curv_assump_k}
       -\rho(x)^{-2-\varepsilon}e^{2k\rho(x)}\le K(P_x)\le -k^2
     \end{equation}
   for some constants $k>0$ and $\varepsilon>0$ and for all $2$-dimensional subspaces $P_x\subset T_x M$, with $\rho(x)\ge R_0$. Assume, furthermore, that 
 $f\in C^2(M\times\R)$ satisfies \eqref{f-form}, \eqref{f1}, and \eqref{f2}, with $f_a(t)=t$ for small $t\ge 0$ and $f_a(t)=c_1\sinh (kt) +c_2\cosh (kt)$ for $t\ge R_0$. 
   Then the asymptotic Dirichlet problem for the equation \eqref{mingrapheq} is
   solvable for any boundary data $\varphi\in C\bigl(\partial_{\infty}M\bigr)$.
\end{cor}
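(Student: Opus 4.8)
The plan is to deduce Corollary~\ref{HVkor2_RT} from Theorem~\ref{ThmMain}. Since the function $f$ is assumed to satisfy \eqref{f-form}, \eqref{f1}, and \eqref{f2} by hypothesis, all that remains is to exhibit smooth functions $a,b\colon[0,\infty)\to[0,\infty)$, $0\le a\le b$, that recast the pinching \eqref{curv_assump_k} as an instance of \eqref{curv_assump_gen} and that satisfy the structural assumptions \eqref{A1}--\eqref{A7} from Section~\ref{DPat8}.

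First I would fix the lower curvature bound. Choose a smooth, nondecreasing $a$ with $a\equiv 0$ on a neighbourhood of $0$ and $a\equiv k$ on $[R_0,\infty)$. For this $a$, the solution $f_a$ of the Jacobi problem \eqref{Jacobi_eq} satisfies $f_a(t)=t$ for small $t\ge 0$, because there $f_a''=0$, and $f_a(t)=c_1\sinh(kt)+c_2\cosh(kt)$ on $[R_0,\infty)$ with $c_1,c_2$ determined by $C^1$-matching; this is precisely the $f_a$ in the statement. In particular $f_a(t)\ge t$ for all $t$, $f_a$ grows exponentially like $e^{kt}$, and $f'_a(t)/f_a(t)\to k>0$ as $t\to\infty$. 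For the upper bound function I would take a smooth $b>0$ with $b(t)=t^{-1-\varepsilon/2}e^{kt}$ for $t\ge R_0$ and with $b\ge a$ everywhere, adjusting $b$ on the compact interval where its prescribed form is not yet monotone, and large enough on $\bar B(o,R_0)$ that $-(b\circ\rho)^2\le K$ there, which is possible since $K$ is bounded on that ball. Then $-(b\circ\rho)^2(x)=-\rho(x)^{-2-\varepsilon}e^{2k\rho(x)}$ for $\rho(x)\ge R_0$, so \eqref{curv_assump_k} yields \eqref{curv_assump_gen}.

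The substantive step is to check \eqref{A1}--\eqref{A7} for this pair $(a,b)$. These reduce to a finite list of elementary positivity, monotonicity, and growth statements about $a,b,f_a,f_b$ together with integral comparisons between $a$ and $b$, and in the present setting each of them is governed by the exponential growth of $f_a$: one has $\int_1^\infty ds/f_a^{n-1}(s)<\infty$; the ratio $b(t)/a(t)=k^{-1}t^{-1-\varepsilon/2}e^{kt}$, so the polynomial prefactor is negligible against $e^{kt}$ in every integral that occurs; and $\int_r^\infty ds/f_a^{n-1}(s)$ decays like $e^{-(n-1)kr}$ while the $b$-weight grows only like $r^{-(n-1)(1+\varepsilon/2)}e^{(n-1)kr}$, making the relevant products integrable. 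I would also point out that the auxiliary function $a_0$ entering \eqref{gV} and \eqref{f1} can be taken to exist, e.g.\ $a_0(r)=(1+r)^{-2}$, for which $\bigl(\int_r^\infty ds/f_a^{n-1}(s)\bigr)f_a^{n-1}(r)$ stays bounded and hence $\int_1^\infty\bigl(\int_r^\infty ds/f_a^{n-1}(s)\bigr)a_0(r)f_a^{n-1}(r)\,dr<\infty$. With \eqref{A1}--\eqref{A7} in hand, Theorem~\ref{ThmMain} applies and gives the corollary.

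I expect the only real work, and the one place to be careful, to be bookkeeping: reconciling the behaviour near $t=0$, where $a$ vanishes and $f_a(t)=t$, with whichever of \eqref{A1}--\eqref{A7} are phrased through $f'_a/f_a$ or $1/f_a$, and choosing $b$ on the compact part so that its prescribed asymptotic form is compatible with the given pinching. Neither presents a genuine difficulty, since $f_a(t)\ge t$ throughout on a Cartan--Hadamard manifold and $K$ is bounded on compact sets; the analysis is entirely parallel to the polynomial-curvature case recorded in Corollary~\ref{thm1}.
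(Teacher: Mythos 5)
Your proposal takes the same route as the paper: specialize Theorem~\ref{ThmMain} by choosing $a\equiv k$ and $b(t)=t^{-1-\varepsilon/2}e^{kt}$ on $[R_0,\infty)$ (smoothed and made constant near $0$ as required), and then verify \eqref{A1}--\eqref{A7}. The only difference is that the paper delegates this verification entirely to \cite[Cor.~3.23]{HoVa}, whereas you sketch it in-line; your sketch is correct, with the one point worth making explicit being that \eqref{A7} holds for this $b$ exactly when one takes $C_4<\varepsilon/2$.
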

We refer to \cite[Ex. 2.1, Cor. 3.22]{HoVa} and to \cite[Cor. 3.23]{HoVa} for the verification of the assumptions \eqref{A1}-\eqref{A7} for the curvature bounds \eqref{curv_ass_minim} and \eqref{curv_assump_k}, respectively. We point out that, thanks to Examples \ref{a_0_exam1} and \ref{a_0_exam2}, the assumption \eqref{f1} in the above corollaries 
is weaker than \eqref{f2}
when $r\to \infty$.

Let us discuss where the assumptions \eqref{f1} and \eqref{f2} will be used in our paper. First of all, we prove Theorem~\ref{ThmMain} by extending the boundary value function $\varphi$ to $M$, exhausting $M$ by geodesic balls and solving the Dirichlet problem \eqref{mingrapheq} in each ball. In this step, the assumption 
\[
\sup_{\partial B(o,r)\times\R}\abs{\nb f} \le (n-1)\frac{f^\prime_a(r)}{f_a(r)}
\]
is used. Secondly, the other assumption in \eqref{f1},
\[
\sup_{\partial B(o,r)\times\R}\abs{\nb f} \le
\frac{a_0(r)+(n-1)\frac{f^\prime_a (r)}{f_a(r)}g^3(r)}{\bigl(1+g^2(r)\bigr)^{3/2}},
\]
is used to prove that the sequence of solutions above is uniformly bounded, thus allowing us to extract a subsequence converging towards a global solution.
Finally, we apply \eqref{f2} to prove that this global solution has proper boundary values at infinity.
Furthermore, concerning \eqref{f2}, let us mention a result of Pigola, Rigoli, and Setti in \cite{PigolaRigoliSetti}. There they considered the equation
 \[
     \dv \frac{\nabla u}{\sqrt{1+|\nabla u|^2}} =h(x),
 \]
for a function $h\in C^\infty (M)$. They proved that if $\max_M |u|<\infty$, $h$ has a constant sign,
and $M$ satisfies one of the following growth assumptions:
\begin{equation}
\label{(i)}
\vol \bigl(\partial B(o,r)\bigr)\leq C r^\alpha,\text{ for some }\alpha\geq 0
\end{equation}
or
\begin{equation}
\label{(ii)}
\vol \bigl(\partial B(o,r)\bigr)\leq C e^{\alpha r},\text{ for some }\alpha\geq 0,
\end{equation}
then necessarily we have
\[
     \liminf_{\rho(x)\to\infty} \frac{|h(x)|}{\rho^{-2}(x) \bigl(\log \rho(x)\bigr)^{-1}}=0,
\]
and
\[
     \liminf_{\rho(x)\to \infty} \frac{|h(x)|}{\rho^{-1}(x)\bigl(\log r(x)\bigr)^{-1}}=0,
\]
respectively. We notice that condition \eqref{(i)} (resp. \eqref{(ii)}) is implied by \eqref{curv_ass_minim} (resp.
\eqref{curv_assump_k}). On the other hand, assuming \eqref{curv_ass_minim} (resp. \eqref{curv_assump_k}), we notice (using Examples \ref{a_0_exam1} and \ref{a_0_exam2}) that \eqref{f2} reduces to $\sup_{\partial B(o,r)\times \R}|\bar{\nabla} f|=o(r^{-2-\varepsilon})$ (resp.  $\sup_{\partial B(o,r)\times \R}|\bar{\nabla} f|=o(r^{-1-\varepsilon})$) when $r\rightarrow \infty$. Therefore, in these cases, \eqref{f2} is almost sharp.

The paper is organised as follows: in Section~\ref{sec_estim}, we prove a priori height and gradient estimates that are needed in 
Section~\ref{sec_ex} where we apply the Leray-Schauder method and prove Theorem~\ref{existence} and \ref{cont_existence}.
Section~\ref{DPat8} is devoted to the asymptotic Dirichlet problem and proofs of Theorem~\ref{ThmMain} and Corollaries~\ref{thm1}
and \ref{HVkor2_RT}.

\section{Height and gradient estimates}\label{sec_estim}
In this section we adapt
methods from \cite{dajczer2008killing}, 
\cite{dajczer2012interior}, \cite{Korevaar}, and \cite{MR2351645} 
to obtain a priori height and gradient estimates.

\subsection{Height estimate}\label{subsec-height}
We begin by giving an a priori height estimate for solutions of the equation \eqref{mingrapheq} in a bounded
open set $\Omega\subset M$ with a $C^2$-smooth boundary assuming the estimate \eqref{Fxtra} on the function $f$. 
First we construct an upper barrier for a solution $u$ of \eqref{mingrapheq} of the form
   \begin{equation*}
     \psi(x) = \sup_{\p\Omega} \varphi + h \big( d(x) \big),
   \end{equation*}
where $d=\dist(\cdot,\p\Omega)$ is the distance from $\p\Omega$ and $h$ is a real valued function that will be determined later.
Denote by $\Omega_0$ the open set of all points $x\in \Omega$ that can be joined to $\p\Omega$ by a \define{unique} minimizing geodesic. It was shown in 
\cite{LiNirenberg} that in $\Omega_0$ the distance function $d$ has the same regularity as $\p\Omega$. 

In particular, now $d\in C^2(\Omega_0)$ and straightforward computations give
   \begin{equation*}
     \psi_i = h'  d_i \quad \text{and} \quad \psi_{i;j} = h'' d_i  d_j + h'  d_{i;j}.
   \end{equation*}
Moreover, $|\n d|^2 = d^i  d_i = 1$ and hence $d^i  d_{i;j} =0$. We also have that
   \begin{equation*}
     \sigma^{ij}  d_{i;j} = \Delta d = - H,
   \end{equation*} 
where $H=H(x)$ is the (inward) mean curvature of the level set $\{y\in\Omega_0\colon d(y)=d(x)\}$.    

Given a solution $u\in C^{2}(\Omega)$ of \eqref{mingrapheq}, 
    \begin{equation*}
      Q[u] =  \frac{1}{W} \left( \sigma^{ij} - \frac{u^i u^j}{W^2} \right)  u_{i;j} - \ang{\nb f,\nu} = 0,
    \end{equation*}
we define $b\colon\Omega\to\R$ by
\begin{equation}\label{b-def}
b(x)=\left\langle \bar{\nabla}f\bigl(x,u(x)\bigr),\nu(x)\right\rangle,
\end{equation}
where $\nu(x)$ is the downward pointing unit normal to the graph of $u$ at $\big(x,u(x)\big)$.  
Next we define an operator
   \begin{equation*}
   \tilde{Q}[v] =  \frac{1}{W} \left( \sigma^{ij} - \frac{v^i v^j}{W^2} \right)  v_{i;j} - b,
   \end{equation*}
where $W = \sqrt{1+|\nabla v|^2}$ and $b$ does not depend on $v$. The reason to define such an operator is that
it allows us to use the comparison principle whereas the operator $Q$ need not satisfy the required assumptions,
see e.g. \cite[Theorem 10.1]{GilTru}.
Then for a point $x\in \Omega_0$ we obtain
   \begin{align} \label{operQ}
    \tilde{Q}[\psi] + b &= \frac{1}{W} \left( \sigma^{ij} - \frac{(h')^2 d^i d^j}{W^2} \right)
       \left( h'' d_i  d_j + h'  d_{i;j} \right) \nonumber \\
    &= \frac{1}{W} \left( h'' + h' \Delta d - \frac{(h')^2 h''}{W^2} \right) \nonumber \\
    &= \frac{1}{W} \left( \frac{h''}{W^2} - h' H(x) \right) \nonumber \\
    &= \frac{h''}{W^3} - \frac{h'}{W} H(x),
   \end{align}
where we used that $W^2 = 1+(h')^2$.

Next we impose an extra condition on the function $f\colon M\times\R\to\R$ by assuming that 
\begin{equation}\label{Fxtra}
\sup_{s\in\R}\abs{\bar{\nabla}f(x,s)}\le H(x)
\end{equation}
for all $x\in\Omega_0$. Hence $\abs{b(x)}\le H(x)$ for all $x\in\Omega_0$. By choosing
   \[
     h= \frac{e^{AC}}{C}\big(1-e^{-Cd}\big),
   \]
where $A = \diam(\Omega)$ and 
\[
C>\sup_{\Omega_0\times\R}\abs{\bar{\nabla}f}
\]
 is a constant, we obtain
   \[
     h' = e^{C(A-d)} \ge 1 \quad \text{and} \quad h'' = -Ch',
   \]
and so
 \begin{align*}
  \tilde{Q}[\psi] &= -\frac{Ch'}{W^3} - \frac{h'H}{W} - b \\
  &< - \abs{b}\left(\frac{h'}{W^3} + \frac{h'}{W}-1 \right) \\
  &\le 0.
 \end{align*}
Therefore we have  
   \begin{equation*} \begin{cases}
     \tilde{Q}[\psi] <  0 = \tilde{Q}[u] = Q[u] \quad \text{in } \Omega_0 \\
     \psi|\p\Omega \ge u|\p\Omega = \varphi|\p\Omega. \end{cases}
   \end{equation*}
Next we observe that $\psi \ge u$ in $\bar{\Omega}$. Assume on the contrary that the continuous function $u-\psi$ attains its positive maximum 
at an interior point $x_0 \in \Omega$. As in \cite[p. 795]{MR2351645} (see also \cite[pp. 239-240]{dajczer2008killing}),
we conclude that, in fact, $x_0$ is an interior point of $\Omega_0$ that leads to a contradiction
with the comparison principle \cite[Theorem 10.1]{GilTru} which states that $u-\psi$ can not attain its maximum in 
the open set $\Omega_0$.

Similarly we deduce that $\psi^{-}$,
\[
\psi^{-}(x)=\inf_{\p\Omega} \varphi - h \big( d(x) \big),
\]
is a lower barrier for $u$, i.e. $\psi^{-}\le u$ in $\bar{\Omega}$.
These barriers imply the following height estimate for $u$.
\begin{lem} \label{heightestim}
Let $\Omega\subset M$ be a bounded open set with a $C^2$-smooth boundary and suppose that
\begin{equation}\label{Fxtra-again}
 \sup_{s\in\R}\abs{\bar{\nabla}f(x,s)}\le H(x)
\end{equation}
in $\Omega_0$. Let $u \in C^2(\Omega)\cap C(\bar{\Omega})$ be a solution of $Q[u] = 0$ with $u|\p\Omega = \varphi$.
Then there exists a constant $C=C(\Omega)$
 such that
   \[
     \sup_{\Omega} |u| \le C + \sup_{\p\Omega} |\varphi|.
   \]
\end{lem}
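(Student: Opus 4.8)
The statement essentially just packages the barrier construction that precedes it, so the plan is to make that packaging explicit. The upper barrier $\psi(x)=\sup_{\p\Omega}\varphi+h(d(x))$ and the lower barrier $\psi^{-}(x)=\inf_{\p\Omega}\varphi-h(d(x))$ have already been shown to satisfy $\psi^{-}\le u\le\psi$ in $\bar\Omega$, where
\[
h(d)=\frac{e^{AC}}{C}\bigl(1-e^{-Cd}\bigr),\qquad A=\diam(\Omega),
\]
and $C$ is any constant exceeding $\sup_{\Omega_0\times\R}\abs{\bar\nabla f}$; such a $C$ exists and is finite because, on $\Omega_0$, the hypothesis \eqref{Fxtra-again} bounds $\sup_{s}\abs{\bar\nabla f(x,s)}$ by $H(x)$, which is itself bounded on the relatively compact set $\Omega_0$ (alternatively one may simply invoke $F=\sup_{\bar\Omega\times\R}\abs{\bar\nabla f}<\infty$ in the applications, but here it is cleaner to note $H$ is continuous on $\bar{\Omega}_0$).

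First I would record the elementary bound $0\le h(d)\le \frac{e^{AC}}{C}$ for $d\ge 0$, so that in particular $\sup_{\bar\Omega}h(d(\cdot))=:C_0$ is a finite constant depending only on $\Omega$ (through $A$ and the admissible choice of $C$). Then from $\psi^{-}\le u\le\psi$ we get, pointwise in $\bar\Omega$,
\[
-\sup_{\p\Omega}\abs{\varphi}-C_0 \;\le\; \inf_{\p\Omega}\varphi - h(d(x)) \;\le\; u(x) \;\le\; \sup_{\p\Omega}\varphi + h(d(x)) \;\le\; \sup_{\p\Omega}\abs{\varphi}+C_0,
\]
whence $\sup_\Omega\abs{u}\le C_0+\sup_{\p\Omega}\abs{\varphi}$, which is the claim with $C=C_0=C(\Omega)$.

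The only point requiring a little care — and the one I would expect to be the genuine, if modest, obstacle — is the same delicate step already flagged in the text: the comparison argument that yields $\psi^{-}\le u\le\psi$ is carried out on the open set $\Omega_0$ (where $d$ is $C^2$ by \cite{LiNirenberg}), not on all of $\Omega$, so one must rule out the maximum of $u-\psi$ being attained at a point of $\Omega\setminus\Omega_0$ or on the cut locus. This is handled exactly as in \cite[p.~795]{MR2351645} (cf.\ \cite[pp.~239--240]{dajczer2008killing}): if $u-\psi$ had a positive interior maximum at $x_0$, one shows $x_0$ must in fact lie in $\Omega_0$, contradicting \cite[Theorem 10.1]{GilTru}. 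Granting this, the height estimate follows immediately by combining the two barrier inequalities as above, with no further computation needed.
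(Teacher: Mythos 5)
Your proposal packages the preceding barrier construction exactly as the paper does, and the main line of reasoning — $\psi^-\le u\le\psi$ with $h$ bounded by $e^{AC}/C$, hence the height bound — is correct and is the paper's proof. You also rightly single out the cut-locus step (ruling out a maximum of $u-\psi$ on $\Omega\setminus\Omega_0$) as the only delicate point and handle it the same way the paper does.

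The one thing that is actually wrong is your parenthetical justification for the finiteness of the barrier constant: you claim $H$ is ``continuous on $\bar\Omega_0$'' and hence bounded on the relatively compact $\Omega_0$. That is false in general. The inward mean curvature of the level sets of $d$ typically blows up as one approaches the cut locus of $\partial\Omega$: the simplest example is $\Omega=B(o,R)$ in a Cartan--Hadamard manifold, where $\Omega_0=B(o,R)\setminus\{o\}$ and $H(x)=\Delta\rho(x)\sim(n-1)/\rho(x)\to\infty$ as $x\to o$; the same phenomenon occurs near focal points of $\partial\Omega$ for a general domain. So the hypothesis $\sup_s\abs{\bar\nabla f(x,s)}\le H(x)$ on $\Omega_0$ does \emph{not} by itself give a finite admissible $C$. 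The route you flag as ``alternative'' is in fact the one that works and the one the paper implicitly relies on: all applications of this lemma carry the standing hypothesis $F=\sup_{\bar\Omega\times\R}\abs{\bar\nabla f}<\infty$ (or a bound like the one in Lemma~\ref{adp-gb} combined with $f\in C^2$ and boundedness of $u$), and one chooses $C>F$. With that adjustment — and it is worth noting the resulting constant then depends on $F$ as well as on $\diam\Omega$, so ``$C=C(\Omega)$'' should really be read as $C=C(\Omega,F)$ — your proof is complete.
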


\subsection{Boundary gradient estimate}

In this subsection we will obtain an a priori boundary gradient estimate for the Dirichlet problem \eqref{mingrapheq}. We assume that $\Omega\subset M$ is a bounded open set with a $C^2$-smooth boundary and that $\Omega_\ve$ 
is a sufficiently small tubular neighborhood of $\p\Omega$ so that the distance function 
$d$ from $\p\Omega$ is $C^2$ in $\Omega_\ve\cap \bar{\Omega}$. Furthermore, we assume that the (inward) mean curvature $H=H(x)$ of the level set  $\{y\in\bar{\Omega}_0\colon d(y)=d(x)\}$
satisfies
\begin{equation}\label{H-f}
H(x)\ge\sup_{s\in\R}\abs{\bar{\nabla}f(x,s)}:=F(x)
\end{equation}
for all $x\in\Omega_\ve\cap\bar{\Omega}$.
Next we extend the boundary function $\varphi$, which is assumed to be $C^2$-smooth, to $\Omega_\ve$ by setting $\varphi\big(\exp_{y}t \n d(y) \big) = \varphi(y)$, for $y\in\p\Omega$, where $\n d(y)$ is the unit \define{inward} normal to $\p\Omega$ at $y\in\p\Omega$. We will construct barriers of the form $w+\varphi$, 
where $w= \psi\circ d$ and $\psi$ is a real function that will be determined later. 

We denote
   \begin{equation}\label{aijdef}
     a^{ij}=a^{ij}(x,\n v) = \frac{1}{W} \left( \sigma^{ij} - \frac{v^iv^j}{W^2} \right),\quad
       W=\sqrt{1+\abs{\n v}^2},
   \end{equation}
and, given a solution $u\in C^2(\Omega)\cap C^1(\bar{\Omega})$ of \eqref{mingrapheq}, we define an operator
\[
\tilde{Q}[v]=a^{ij}(x,\n v)v_{i;j}-b,
\]
with $b$ as in \eqref{b-def}.

The matrix $a^{ij}(x,\n v)$ is positive definite with eigenvalues 
   \begin{equation}\label{eigval}
     \lambda = \frac{1}{W^3} \quad \text{and} \quad \Lambda = \frac{1}{W}
   \end{equation}
with multiplicities $1$ and $n-1$ corresponding respectively to the directions parallel and orthogonal
to $\n v$. Hence a simple estimate gives 
\begin{equation} \label{Qestim}
     \tilde{Q}[w+\varphi] = a^{ij}( w_{i;j} +  \varphi_{i;j}) - b 
\le a^{ij}  w_{i;j} + \Lambda \norm{\varphi}_{C^2} - b,
\end{equation}   
 where $a^{ij}=a^{ij}(x, \n w + \n \varphi),\ 
 \Lambda=(1+\abs{\n w +\n \varphi}^2)^{-1/2}$,
and $\norm{\varphi}_{C^2}$ denotes the $C^{2}(\Omega_{\ve})$-norm of $\varphi$.
Since in $\Omega_\ve\cap\bar{\Omega}$ we have $|\n d|^2 = d^i d_i = 1$, $d^i d_{i;j} =0$, and $\ang{\n d,\n \varphi}=0$, straightforward computations give that
   \begin{align*}
     \Delta w = \psi'' &+ \psi' \Delta d, \\
     w^i w^j  w_{i;j} &= (\psi')^2 \psi'', \\
     w^i \varphi^j  w_{i;j} &= \psi' \psi'' \ang{\n d, \n \varphi} = 0,
   \end{align*}
and also
   \[
     \varphi^i \varphi^j  w_{i;j} =\psi''\ang{\n \varphi,\n d}^{2}  +\psi' \varphi^i\varphi^j  d_{i;j} =\psi' \varphi^i\varphi^j  d_{i;j}.
   \]
With these, and noticing that now $W^2= 1+ (\psi')^2 + |\n \varphi|^2$, we obtain
   \begin{align} \label{aijweq}
    a^{ij}  w_{i;j} &= \frac{\psi' \Delta d}{W} + \frac{\psi''(1+ |\n \varphi|^2)}{W^3} -
    \frac{\psi' \varphi^i \varphi^j  d_{i;j}}{W^3}.
   \end{align}
Putting \eqref{Qestim} and \eqref{aijweq} together, we arrive at
\begin{equation}\label{1stestim}
    \tilde{Q}[w+\varphi] \le \frac{\psi' \Delta d}{W} + \frac{\psi''(1+ |\n \varphi|^2)}{W^3} -
    \frac{\psi' \varphi^i \varphi^j  d_{i;j}}{W^3} + \Lambda \norm{\varphi}_{C^2} +F.
\end{equation}

Next we define
   \[
     \psi(t) = \frac{C\log(1+Kt)}{\log(1+K)},
   \]
where the constants 
\[
C \ge 2\big(\max_{\bar{\Omega}}|u| +\max_{\bar{\Omega}}|\varphi|\big),
\]
$K\ge (1-2\ve)\ve^{-2}$, and $\ve\in (0,1/2)$ will be chosen later. Then
\[
\psi(\ve)=\frac{C\log(1+K\ve)}{\log(1+K)}\ge C/2
\]
and we have
 \begin{equation}\label{inbdry}
(w+\varphi)|\Gamma_\ve =  \psi(\ve) + \varphi|\Gamma_\ve 
 \ge u|\Gamma_\ve
\end{equation}
on the ``inner boundary'' $\Gamma_{\ve}=\{x\in\Omega\colon d(x)=\ve\}$ of $\Omega_\ve$.
On the other hand,
 \begin{equation}\label{outbdry}
     (w+\varphi)|\p\Omega =  u|\p\Omega.
 \end{equation}
 
We claim that $\tilde{Q}[w+\varphi]\le 0$ in $\Omega_\ve\cap\Omega$ if $C,\ K$, and $\ve$ 
are properly chosen. All the computations below will be done in $\Omega_\ve\cap\Omega$ without further notice.
We first observe that
 \[
     \psi'(t) = \frac{CK}{(1+Kt)\log(1+K)} \quad \text{and} \quad 
     \psi''(t) = -\frac{\log(1+K)\psi'(d)^2}{C},
   \]
and therefore we have
\begin{align}\label{2ndestim}
W  \tilde{Q}[w+\varphi] &\le \big(W-\psi'\big)H 
- \frac{\log(1+K)}{C}\left(\frac{\psi'}{W}\right)^2\big(1+|\nabla\varphi|^2\big)\nonumber\\
&\quad + \norm{\varphi}_{C^2}
+ |\nabla\varphi|^2 H
\end{align}
by \eqref{H-f}, \eqref{eigval}, and \eqref{1stestim}. We estimate
\[
\psi'\ge\frac{CK}{(1+K\ve)\log(1+K)}=\frac{C}{(\ve+1/K)\log(1+K)} = 1
\]
and consequently, 
\begin{align*}
\frac{\psi'}{W} &\ge c_1=c_1\big(\max_{\bar{\Omega}}|\nabla\varphi|\big)>0\\
\noalign{and}
 W-\psi'&\le c_2=c_2\big(\max_{\bar{\Omega}}|\nabla\varphi|\big)
\end{align*}
by choosing $C= (\ve+1/K)\log(1+K)$.
The claim $\tilde{Q}[w+\varphi]\le 0$ now follows from \eqref{2ndestim} since
\[
\frac{\log(1+K)}{C}=\frac{1}{\ve +1/K}\ge \frac{c_2 H+\norm{\varphi}_{C^2}+|\nabla\varphi|^2 H}{c_1^2\big(1+|\nabla\varphi|^2\big)}
\]
by choosing sufficiently small $\ve$ and large $K$ depending only on $\max_{\bar{\Omega}}|u|,\ \norm{\varphi}_{C^2}$, and $H_{\p\Omega}$.

Hence
\[
\tilde{Q}[w+\varphi]\le 0=\tilde{Q}[u],
\]
and therefore 
$w+\varphi$ is an upper barrier in $\Omega_\ve\cap\Omega$. Similarly, $-w+\varphi$ is a lower barrier.  
Together these barriers imply that
\[
\abs{\n u}\le \abs{\n w} +\abs{\n \varphi}= \psi'(0)+\abs{\n \varphi}=\frac{CK}{\log(1+K)}+\abs{\n \varphi}
\]
on $\p\Omega$.

We have proven the following boundary gradient estimate.
   \begin{lem}\label{boundestim}
   Let $\Omega\subset M$ be a bounded open set with a $C^2$-smooth boundary and suppose that
\begin{equation}\label{H-f-again}
 \sup_{s\in\R}\abs{\bar{\nabla}f(x,s)}\le H(x)
\end{equation}
in some tubular neighborhood of $\p\Omega$. Let $u\in C^2(\Omega) \cap C^1(\bar{\Omega})$ be a solution to $Q[u] =0$ with $u|\p\Omega =\varphi\in C^{2}(\p\Omega)$. Then
       $$
         \max_{\p\Omega} |\n u| \le C,
       $$
     where $C$ is a constant depending only on $\sup_{\bar{\Omega}} |u|,\ H_{\p\Omega}$, 
     and $\norm{\varphi}_{C^2(\p\Omega)}$.
   \end{lem}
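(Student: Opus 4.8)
The plan is to prove Lemma~\ref{boundestim} by a barrier argument confined to a thin collar of $\p\Omega$, which is all that a boundary gradient bound requires. First I would fix a tubular neighborhood $\Omega_\ve$ of $\p\Omega$ small enough that the distance function $d=\dist(\cdot,\p\Omega)$ is $C^2$ on $\Omega_\ve\cap\bar\Omega$ and the hypothesis \eqref{H-f-again} holds there, and extend the boundary data by declaring $\varphi$ constant along the inward normal geodesics $t\mapsto\exp_y\!\bigl(t\,\n d(y)\bigr)$. I would then seek an upper barrier $w+\varphi$ with $w=\psi\circ d$ and $\psi(t)=C\log(1+Kt)/\log(1+K)$, the constants $C,K,\ve$ to be pinned down, together with the symmetric lower barrier $-w+\varphi$.

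The heart of the argument is the differential inequality $\tilde Q[w+\varphi]\le 0$ in $\Omega_\ve\cap\Omega$, where $\tilde Q[v]=a^{ij}(x,\n v)v_{i;j}-b$ carries the zeroth-order coefficient $b$ from \eqref{b-def} \emph{frozen}; this freezing is exactly what permits an appeal to the comparison principle \cite[Theorem~10.1]{GilTru}, which need not apply to $Q$ itself. Using $|\n d|=1$, $d^i d_{i;j}=0$, $\ang{\n d,\n\varphi}=0$ and $\Delta d=-H$ I would compute $a^{ij}w_{i;j}$ as in \eqref{aijweq}, estimate the extra term by $a^{ij}\varphi_{i;j}\le\Lambda\norm{\varphi}_{C^2}$ using the eigenvalues \eqref{eigval}, and invoke $|b|\le F\le H$ to reach \eqref{2ndestim}. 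Choosing $C=(\ve+1/K)\log(1+K)$ makes $\psi'\ge\psi'(\ve)=1$ on the collar, so the negative term $-\tfrac{\log(1+K)}{C}\bigl(\psi'/W\bigr)^2\bigl(1+|\n\varphi|^2\bigr)$ in \eqref{2ndestim} dominates the remaining three terms once $\tfrac{1}{\ve+1/K}$ exceeds $\bigl(c_2H+\norm{\varphi}_{C^2}+|\n\varphi|^2H\bigr)\big/\bigl(c_1^2(1+|\n\varphi|^2)\bigr)$, the $c_i$ depending only on $\max_{\bar\Omega}|\n\varphi|$.

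Next I would check the ordering of $w+\varphi$ and $u$ on $\p(\Omega_\ve\cap\Omega)=\p\Omega\cup\Gamma_\ve$: on $\p\Omega$ one has $(w+\varphi)|\p\Omega=\varphi=u|\p\Omega$ since $w$ vanishes there, and on $\Gamma_\ve=\{d=\ve\}$ the bound $\psi(\ve)\ge C/2\ge\max_{\bar\Omega}|u|+\max_{\bar\Omega}|\varphi|$ forces $(w+\varphi)|\Gamma_\ve\ge u|\Gamma_\ve$. Combined with $\tilde Q[w+\varphi]\le 0=\tilde Q[u]$, the comparison principle gives $u\le w+\varphi$ on $\Omega_\ve\cap\Omega$, and symmetrically $u\ge -w+\varphi$. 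Since $u$ and both barriers share the value $\varphi$ on $\p\Omega$, comparing inward normal derivatives at $\p\Omega$ yields $|\p_\nu u-\p_\nu\varphi|\le\psi'(0)=CK/\log(1+K)$; together with the fact that the tangential gradient of $u$ along $\p\Omega$ equals that of $\varphi$, this gives $\max_{\p\Omega}|\n u|\le CK/\log(1+K)+\max_{\p\Omega}|\n\varphi|$, a constant depending only on $\sup_{\bar\Omega}|u|$, $H_{\p\Omega}$ and $\norm{\varphi}_{C^2(\p\Omega)}$, which is the assertion.

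I expect the main obstacle to be the bookkeeping that makes the constraints on $(\ve,K,C)$ hold simultaneously while still forcing $\tilde Q[w+\varphi]\le 0$: one needs the relation $C=(\ve+1/K)\log(1+K)$, the lower bound $C\ge 2(\max_{\bar\Omega}|u|+\max_{\bar\Omega}|\varphi|)$, the inequality $K\ge(1-2\ve)\ve^{-2}$ (which is precisely what yields $\psi(\ve)\ge C/2$), and the smallness of $\ve+1/K$ from the previous paragraph. The order that works is to choose $\ve\in(0,1/2)$ small, depending only on $\max_{\bar\Omega}|u|$, $\norm{\varphi}_{C^2}$ and $H_{\p\Omega}$, so as to meet the smallness condition, and then take $K$ large, which automatically secures $K\ge(1-2\ve)\ve^{-2}$ and drives $C=(\ve+1/K)\log(1+K)\sim\ve\log K$ above its lower bound. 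One must also remember throughout that $d\in C^2$ only in a sufficiently thin collar, so every computation has to stay inside $\Omega_\ve\cap\bar\Omega$.
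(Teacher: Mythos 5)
Your proposal reproduces the paper's argument essentially step for step: the same collar $\Omega_\ve\cap\bar\Omega$ and normal extension of $\varphi$, the same logarithmic barrier $\psi(t)=C\log(1+Kt)/\log(1+K)$, the same frozen-coefficient operator $\tilde Q$ permitting the comparison principle, the same eigenvalue estimate $a^{ij}\varphi_{i;j}\le\Lambda\norm{\varphi}_{C^2}$, the same normalization $C=(\ve+1/K)\log(1+K)$ yielding $\psi'\ge 1$, the same constraints $\ve\in(0,1/2)$, $K\ge(1-2\ve)\ve^{-2}$, and the same boundary comparison on $\p\Omega\cup\Gamma_\ve$ leading to $\max_{\p\Omega}|\nabla u|\le CK/\log(1+K)+|\nabla\varphi|$. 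The parameter bookkeeping you highlight (choose $\ve$ small first, then $K$ large) is exactly the paper's, so this is the same proof, correctly carried out.
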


\subsection{Interior gradient estimate}\label{int_grad_sec}

In this subsection we will assume that $u$ is a $C^3$ function. The
elliptic regularity theory will guarantee that the estimate holds also for $C^{2,\alpha}$ solutions. We also assume that $f\colon M\times\R\to\R$ is of the form
\[
f(x,t)=m(x)+r(t).
\]
In particular, all ``space'' derivatives
\[
f_i=\frac{\p f}{\p x_i},\quad i=1,\ldots,\dim M,
\]
are independent of $t$; $f_{it}=f_{ti}=0$.
Since we are dealing with the Riemannian product $M\times\R$ carrying the parallel vector field $\p_t$, this
assumption is not so unnatural. Also, due to the fact that $f$ depends also on the $\R$-variable, it seems to be 
hard, if not impossible, to adapt other known approaches  (see e.g. \cite{Dajczer2016}) to get rid of this assumption.


For an open set $\Omega\subset M$, we denote $i(\Omega)=\inf_{x\in\Omega}i(x)$, where
$i(x)$ is the injectivity radius at $x$. Thus $i(\Omega)>0$ if $\Omega\Subset M$ is 
relatively compact.  Furthermore, we denote by $R_{\Omega}$ the Riemannian curvature 
tensor in $\Omega$.

\begin{lem}\label{globestim}
Let $u\in C^3 (\Omega)$ be a solution of \eqref{mingrapheq} with $u<m_u$ for some constant $m_u<\infty$.
\begin{enumerate}
\item[(a)] For every ball $B(o,r)\subset\Omega$, there exists a constant 
\[
L=L\big(u(o),m_u,r,R_{\Omega},\norm{f}_{C^2(\Omega\times (-\infty,m_u))}\big)
\] 
such that $\abs{\nabla u(o)}\le L$.
\item[(b)] If, furthermore, $u\in C^1(\bar{\Omega})$, we have a global gradient bound 
\[
\abs{\nabla u(o)}\le L
\]
for every $o\in\bar{\Omega}$, with
\[
L=L\big(u(o),m_u,i(\Omega),\diam(\Omega),R_{\Omega},\norm{f}_{C^2(\Omega\times (-\infty,m_u))},\max_{\partial\Omega}\abs{\nabla u})\big)<\infty.
\]
\end{enumerate}
\end{lem}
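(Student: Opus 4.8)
The plan is to adapt the classical interior gradient estimate for the minimal surface equation (as in \cite{Korevaar}, and in the form used in \cite{dajczer2012interior} and \cite{MR2351645}) to the $f$-minimal setting, where the extra term $\ang{\nb f,\nu}$ on the right-hand side of \eqref{mingrapheq} must be controlled using the hypothesis $\norm{f}_{C^2}<\infty$ on the relevant slab. The strategy is the standard one: work with the graph $\Sigma = \{(x,u(x))\}$ as a hypersurface of $N = M\times\R$, and apply a maximum principle to an auxiliary function of the form
\[
\eta\cdot W \quad\text{or}\quad \log W + \text{(geometric cutoff)} + \lambda u,
\]
where $W = \sqrt{1+\abs{\nabla u}^2}$ is (up to sign) $-1/\ang{\partial_t,\nu}$, the cutoff $\eta$ is supported in the ball $B(o,r)$, and $\lambda u$ is a Korevaar-type term exploiting the one-sided bound $u<m_u$. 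One first derives an equation (or differential inequality) for $W$ along $\Sigma$: differentiating the $f$-minimal equation and using the Gauss and Codazzi equations, $\Delta_\Sigma W$ (or $\Delta_\Sigma \log W$) picks up the term $\abs{A}^2 W$ from $\abs{\nabla u}$ twice, a curvature term controlled by $R_\Omega$, and terms coming from $\nb f$ and $\bar\Hess f$, which are bounded by $\norm{f}_{C^2}$. The point of including $f(x,t)=m(x)+r(t)$ with $f_{it}=0$ is precisely that the mixed derivatives that would otherwise appear drop out, so the $f$-contribution to the differential inequality for $W$ is of the schematic form $c_1 W + c_2$ with $c_i$ depending on $\norm{f}_{C^2}$.

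For part (a), I would fix $B(o,r)\subset\Omega$, let $d(y)$ denote the distance in $M$ from $o$ (so $d$ is smooth on $B(o,r)$ up to the injectivity radius issue, which is fine inside a ball contained in $\Omega$), and set $\eta = g(d)$ for a suitable function $g$ vanishing on $\partial B(o,r)$ — e.g.\ $g(t) = 1 - t^2/r^2$ or an exponential variant — together with $\zeta = e^{\kappa(m_u - u)}$ for a constant $\kappa$ chosen large relative to $\norm{f}_{C^2}$ and the lower Ricci/curvature bound. Then consider $h = \eta\,\zeta\, W$ and evaluate at an interior maximum point $p$. At $p$ one has $\nabla_\Sigma h = 0$, giving $\nabla_\Sigma W / W = -\nabla_\Sigma(\log\eta + \log\zeta)$, and $\Delta_\Sigma h \le 0$. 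Substituting the differential inequalities for $W$, for $\Delta_\Sigma d$ (comparison theorem, using the bound on $R_\Omega$), and for $\Delta_\Sigma u$ (which is essentially $\ang{\nb f,\nu}$ up to the normalization, hence bounded by $\norm{f}_{C^1}$), and absorbing the bad $\abs{A}^2$ terms using the $\abs A^2$ coming from $\Delta_\Sigma W$, one obtains an inequality of the form $W(p) \le$ (explicit function of $r$, $R_\Omega$, $\norm{f}_{C^2}$, $\kappa$, $m_u - u(o)$). Since $h(o) \le h(p)$ and $\eta(o),\zeta(o)$ are bounded below by constants of the stated type, this yields $W(o) \le L$, which is the desired bound on $\abs{\nabla u(o)}$.

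For part (b), the modification is standard: when $u\in C^1(\bar\Omega)$ one cannot assume the maximum of $h$ is interior, so one either replaces the geometric cutoff $\eta$ by a cutoff that equals $1$ on all of $\Omega$ and handles the boundary contribution by comparing with $\max_{\partial\Omega}\abs{\nabla u}$, or runs the same argument on $\Omega$ itself with $\eta\equiv 1$ and notes that the maximum of $\zeta W$ over $\bar\Omega$ is attained either at an interior point (handled as above, with $r$ replaced by a quantity controlled by $i(\Omega)$ and $\diam(\Omega)$) or on $\partial\Omega$ (where $\zeta W$ is bounded by $\max_{\partial\Omega}\abs{\nabla u}$ and $\sup_\Omega\abs u$). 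Combining the two cases gives the global bound with the asserted dependence on $i(\Omega)$, $\diam(\Omega)$, $R_\Omega$, $\norm{f}_{C^2}$, $m_u$, and $\max_{\partial\Omega}\abs{\nabla u}$.

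The main obstacle I anticipate is bookkeeping rather than conceptual: carefully deriving the Bochner-type inequality for $W$ (or $\log W$) on the $f$-minimal graph and verifying that every term involving $f$ is genuinely controlled by $\norm{f}_{C^2(\Omega\times(-\infty,m_u))}$ — in particular checking that the splitting $f(x,t)=m(x)+r(t)$, hence $f_{it}=0$, really is what kills the only potentially dangerous cross terms — and then choosing the constants $\kappa$ (in $\zeta$) and the shape of $\eta$ so that the positive $\abs A^2$ term from $\Delta_\Sigma W$ dominates all the negative-sign $\abs A^2$ contributions after applying Cauchy--Schwarz to the first-order terms at the maximum point. This is exactly the place where the methods of \cite{Korevaar}, \cite{dajczer2012interior}, and \cite{MR2351645} are invoked, so I would follow their computation closely, inserting the $\nb f$ and $\bar\Hess f$ terms at each step and tracking their dependence on the data.
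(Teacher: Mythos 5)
Your overall plan is in the right spirit and uses the same ingredients as the paper: apply a Korevaar-type maximum principle to a weighted version of $W=\sqrt{1+\abs{\nabla u}^2}$, use the splitting $f(x,t)=m(x)+r(t)$ (so $f_{it}=0$) to kill the dangerous cross terms, and control the remaining $f$-contributions by $\norm{f}_{C^2}$. Whether one works intrinsically on the graph $\Sigma$ (with $\Delta_\Sigma$, Gauss--Codazzi, and $\abs{A}^2$, as you propose) or extrinsically in the domain $\Omega$ (with $a^{ij}u_{i;j}$ and the Ricci identity $u_{k;ij}=u_{i;jk}+R^\ell_{kji}u_\ell$, as the paper does) is purely a matter of bookkeeping; both routes are standard.

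However, there is a genuine gap in the specific form of your cutoff. You factor it as $\eta\cdot\zeta$ with $\eta=g(d)$ a spatial cutoff and $\zeta=e^{\kappa(m_u-u)}$, claiming the latter implements Korevaar's trick for the one-sided bound $u<m_u$. It does not, for two reasons that interact. First, $\log\zeta=\kappa(m_u-u)$ is \emph{decreasing} in $u$, the opposite of what the trick requires; the paper's cutoff $\eta=e^{C_1\phi}-1$ with
\[
\phi(x)=\Bigl(1-r^{-2}d^2(x)+C\bigl(u(x)-m_u\bigr)\Bigr)^{+},\qquad C=\frac{1}{2\bigl(m_u-u(o)\bigr)}>0,
\]
is \emph{increasing} in $u$. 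Second, and more importantly, $\phi$ truncates at zero precisely when $u\le 2u(o)-m_u$, so the support of the full cutoff lies inside $B(o,r)\cap\{u>2u(o)-m_u\}$. That confinement is what makes the final comparison step work: at the maximum point $p$ of $h=\eta W$ one has $\eta(p)\le e^{C_1}-1$ and $\eta(o)=e^{C_1/2}-1>0$, so $W(o)\le\eta(p)W(p)/\eta(o)$ gives a constant depending only on $u(o)$, $m_u$, $r$, $R_\Omega$, and $\norm{f}_{C^2}$. Your $\zeta$ is nowhere vanishing and is unbounded above on any region where $u$ is very negative, so $\spt(\eta\zeta)$ is all of $B(o,r)$ and the comparison gives only $W(o)\le e^{\kappa(u(o)-u(p))}W(p)$. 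Since nothing in the hypotheses prevents $u(p)\ll u(o)$, this produces a constant that also depends on $\inf_{B(o,r)}u$ (and, in part (b), on $\inf_\Omega u$), which is strictly weaker than the dependence stated in the lemma. The fix is exactly the paper's: build a \emph{single} cutoff $\phi$, affine in $d^2$ and $u$ and truncated at zero, and feed it through a convex increasing $g$; the convexity of $g$ produces the positive $g''$ term you want at the maximum point, while the truncation confines the support and removes the spurious dependence on $\inf u$.
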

\begin{proof}
We apply the method due to Korevaar and Simon \cite{Korevaar}; see also \cite{dajczer2012interior}. 
Let $0<r\le\min\{i(\Omega),\diam(\Omega)\},\ o\in\Omega$, and let $\eta$ be a continuous non-negative function on $M$, vanishing outside $B(o,r)$ and smooth 
whenever positive. The function $\eta$ will be specified later. Define 
  \[
    h=\eta W
  \] 
and assume first that $h$ attains its maximum at an interior point $p\in B(o,r)\cap\Omega$.
The case $p\in B(o,r)\cap\partial\Omega$ and $u\in C^1(\bar{\Omega})$ will be commented at the end of the proof.

We will first prove an upper bound for $\abs{\nabla u(p)}$. Therefore we may assume that $\abs{\nabla u(p)}\ne 0$.
We choose normal coordinates at $p$ so that $\partial_1=\nabla u/\abs{\nabla u}$ at $p$.
All the computations below will be made at $p$ without further notice. Thus we have $\sigma_{ij}=\sigma^{ij}=\delta^{ij},\ u_1=u^1=\abs{\nabla u}$,
and $u_{j}=u^{j}=0$ for $j>1$. Furthermore,
  \[
      a^{ij}=\frac{1}{W}\left(\delta^{ij}-\frac{\abs{\nabla u}^2\delta^{1i}\delta^{1j}}{W^2}\right),
  \]
and therefore $a^{11}=W^{-3},\ a^{ii}=W^{-1}$ for $i>1$, and $a^{ij}=0$ if $i\ne j$. At the maximum point $p$, we have
$h_i=0$ and $h_{i;i}\le 0$ for all $i$. Hence 
  \begin{equation}\label{etaWsym}
      \eta_i W=-\eta W_i
  \end{equation}
and
  \[
      a^{ij}h_{i;j}=a^{ii}h_{i;i}=a^{ii}\left(W\eta_{i;i}+2\eta_{i}W_{i}+\eta W_{i;i}\right)\le 0.
  \]
With \eqref{etaWsym} we can write this as
  \begin{equation}\label{Waeta}
      Wa^{ii}\eta_{i;i}+\frac{\eta a^{ii}}{W}\left(WW_{i;i}-2(W_{i})^2\right)\le 0.
  \end{equation}
We have
  \[
    W_i=\frac{u^k u_{k;i}}{W}=\frac{\abs{\nabla u}u_{1;i}}{W}
  \]
and from \eqref{unitnormal} we see that the $ k^{\rm th} $ component of the unit normal is 
  \[
    \nu^k =\frac{u^k}{W}=\frac{\abs{\nabla u}\delta^{k1}}{W}.
  \]
To scrutinize the second order differential inequality \eqref{Waeta}, we first compute
  \begin{align*}
    a^{ii}W_{i;i}&=a^{ii}\bigl(W^{-1}u^k u_{k;i}\bigr)_{;i}\\
    &=-\frac{a^{ii}\abs{\nabla u}u_{1;i}W_i}{W^2}+
    \frac{a^{ii}u^{k}_{;i}u_{k;i}}{W}
    +\frac{a^{ii}\abs{\nabla u}u_{1;ii}}{W}\\
    &=-\frac{a^{ii}\abs{\nabla u}^2(u_{1;i})^2}{W^3}+
    \frac{a^{ii}u^{k}_{;i}u_{k;i}}{W}
    +\frac{a^{ii}\abs{\nabla u}u_{1;ii}}{W}\\
    &=\frac{a^{ii}(u_{1;i})^2}{W^3}+\frac{a^{ii}\sum_{k\ne 1}(u_{k;i})^2}{W}+
    \frac{a^{ii}\abs{\nabla u}u_{1;ii}}{W}.
  \end{align*}
Hence
  \begin{equation}\label{W2deriv}
      Wa^{ii}W_{i;i}=A+a^{ii}\abs{\nabla u}u_{1;ii},
  \end{equation}
where
  \[
      A=a^{ii}(u_{1;i})^2W^{-2}+a^{ii}\sum_{k\ne 1}(u_{k;i})^2\ge 0.
  \]
Using the Ricci identities for the Hessian of $u$ we get
  \[
      u_{k;ij}=u_{i;kj}=u_{i;jk}+R^{\ell}_{kji}u_{\ell},
  \] 
where $R$ is the curvature tensor in $M$. This yields
  \begin{equation}\label{ricciaplic}
      \abs{\nabla u}a^{ii}u_{1;ii}=\abs{\nabla u}a^{ii}u_{i;i1}+\abs{\nabla u}^2 a^{ii}R^{1}_{1ii}.
  \end{equation}
To compute $\abs{\nabla u}a^{ii}u_{i;i1}$, we first observe that 
  \[
      Wa^{ij}u_{i;j}=
      Wa^{ii}u_{i;i}=\langle\bar{\nabla}f,(\nabla u,-1)\rangle=f_{i}u^{i}-f_t.
   \]
Since
  \begin{align*}
    \nu^1\bigl(Wa^{ij}\bigr)_{;1}u_{i;j}&=\nu^1\bigl(\sigma^{ij}-u^iu^j W^{-2}\bigr)_{;1}u_{i;j}\\
    &=-\frac{\abs{\nabla u}}{W}\left(\frac{2u^i u^{j}_{;1}}{W^2}-\frac{2u^i u^j W_1}{W^3}\right)u_{i;j}\\
    &=-\frac{2\abs{\nabla u}^2}{W^3}u^{j}_{;1}u_{1;j}+\frac{2\abs{\nabla u}^4(u_{1;1})^2}{W^5}\\
    &=-\frac{2\abs{\nabla u}^2}{W^3}\left(\sum_{i}(u_{1;i})^2-\frac{\abs{\nabla u}^2}{W^2}(u_{1;1})^2\right)\\
    &=-\frac{2\abs{\nabla u}^2a^{ii}(u_{1;i})^2}{W^2}\\
    &=-2a^{ii}(W_i)^2,
  \end{align*}
we obtain
  \begin{align}\label{WaW2deriv}
    \abs{\nabla u}a^{ii}u_{i;i1} &=\abs{\nabla u}a^{ij}u_{i;j1}
    =\nu^{1}Wa^{ij}u_{i;j1}\nonumber\\
    &=\nu^1\bigl(Wa^{ij}u_{i;j}\bigr)_{;1}-\nu^1\bigl(Wa^{ij}\bigr)_{;1}u_{i;j}\nonumber\\
    &=\nu^1\bigl(f_{i}u^{i}-f_t\bigr)_{;1}+2a^{ii}(W_i)^2\nonumber\\
    &=\nu^1\bigl(f_i u^{i}_{;1}+(f_\ell)_{;1}u^{\ell}-(f_t)_{;1}\bigr)+2a^{ii}(W_i)^2\\
    &=\frac{\abs{\nabla u}}{W}\bigl(f_i u^{i}_{;1}+(f_1)_{;1}u^1-f_{tt}u^1\bigr)+2a^{ii}(W_i)^2\nonumber\\
    &=W_{i}f^{i}+\frac{f_{11}\abs{\nabla u}^2}{W}-\frac{f_{tt}\abs{\nabla u}^2}{W}+2a^{ii}(W_i)^2,\nonumber
  \end{align}
where we have denoted $(f_j)_{;1}=\bigl(x\mapsto f_j(x,u(x))\bigr)_{;1}$ and used the assumption $f_{it}=f_{ti}=0$.
Putting together \eqref{etaWsym}, \eqref{W2deriv}, \eqref{ricciaplic}, and \eqref{WaW2deriv} we can estimate the inequality \eqref{Waeta} as
  \begin{align}\label{WaetaEstim}
    0&\ge Wa^{ii}\eta_{i;i}+\frac{\eta a^{ii}}{W}\left(WW_{i;i}-2(W_{i})^2\right)\nonumber\\
&=Wa^{ii}\eta_{i;i} + \frac{\eta}{W}\left( A+\abs{\nabla u}a^{ii}u_{1;ii}-\abs{\nabla u}a^{ii}u_{i;i1} +W_if^i 
+\frac{\abs{\nabla u}^2 \left(f_{11}-f_{tt}\right)}{W}\right)\nonumber\\
    &=Wa^{ii}\eta_{i;i}+\eta\left(\frac{A}{W}+\frac{\abs{\nabla u}^2 a^{ii}R^{1}_{1ii}}{W}
    +\frac{\abs{\nabla u}^2(f_{11}-f_{tt})}{W^2}\right)-f^{i}\eta_{i}\\
    &\ge Wa^{ii}\eta_{i;i}-f^{i}\eta_{i} -N\eta,\nonumber
\end{align}
where $N$ is a positive constant depending only on the curvature tensor in $\Omega$ and
the $C^2$-norm of $f$ in the cylinder $\Omega\times (-\infty,m_u)$. Note that 
$A\ge 0$, $a^{11}=W^{-3}$, and $a^{ii}=W^{-1}$ for $i\ne 1$.

Now we are ready to choose the function $\eta$ as 
  \[
      \eta(x)=g\bigl(\phi(x)\bigr), 
  \]
where
  \[
      g(t)=e^{C_1 t}-1
  \]
with a positive constant $C_1$ to be specified later and
  \[
      \phi(x)=\left(1-r^{-2}d^2 (x)+ C\bigl(u(x)-m_u\bigr)\right)^{+}.
  \]
Here $d(x)=d(x,o)$ is the geodesic distance to $o$ and 
  \[
      C=\frac{-1}{2\bigl(u(o)-m_u\bigr)}>0.
  \]
It follows that $\eta$ fulfils the requirements and, moreover, $\eta(o)=e^{C_1/2}-1>0$.
We have 
  \begin{equation}\label{etai}
      \eta_i=\left(-r^{-2}(d^2)_i +Cu_i\right)g'
  \end{equation}
and
  \begin{equation}\label{etaij}
      \eta_{i;j}=\left(-r^{-2}(d^2)_{i;j} +Cu_{i;j}\right)g'
      +\left(-r^{-2}(d^2)_i +Cu_i\right)\left(-r^{-2}(d^2)_j +Cu_j\right)g''.
  \end{equation}
A straightforward computation gives the estimate
  \begin{align}\label{Waprodestim}
      Wa^{ii}&\bigl(r^{-2}(d^2)_i -Cu_i\bigr)^2 
      = Wa^{ii}\bigl(r^{-4}(d^2)^2_i -2Cr^{-2}(d^2)_i u_i +C^2(u_i)^2\bigr)  \nonumber\\
      &= r^{-4}\abs{\nabla d^2}^2 - 2Cr^{-2}\langle\nabla d^2,\nabla u\rangle 
      +C^2\abs{\nabla u}^2  \nonumber\\
      &\qquad -\frac{\langle\nabla d^2,\nabla u\rangle^2}{r^4 W^2}
      +\frac{2C\abs{\nabla u}^2\langle\nabla d^2,\nabla u\rangle}{r^2 W^2}
      -\frac{C^2\abs{\nabla u}^4}{W^2}  \nonumber\\
      &=\frac{C^2\abs{\nabla u}^2}{W^2}-\frac{2C\langle\nabla d^2,\nabla u\rangle}{r^2 W^2}
      +\frac{1}{r^4}\left(\abs{\nabla d^2}^2-\frac{\langle\nabla d^2,\nabla u\rangle^2}{W^2}\right)  \nonumber\\
      &\ge \frac{C^2\abs{\nabla u}^2}{W^2}-\frac{2C\langle\nabla d^2,\nabla u\rangle}{r^2 W^2}.
  \end{align}
Next we observe that
  \begin{align}\label{waiiest}
      Wa^{ii}\bigl(-r^{-2}(d^2)_{i;i}+Cu_{i;i}\bigr)&=-r^{-2}Wa^{ii}(d^2)_{i;i}+CWa^{ii}u_{i;i}\\
      &=-r^{-2}\Delta d^2+\frac{\abs{\nabla u}^2}{r^2 W^2 }(d^2)_{1;1}+CW\langle\bar{\nabla}f,\nu\rangle\nonumber\\
      &=-r^{-2}\Delta d^2+\frac{\abs{\nabla u}^2}{r^2 W^2 }\Hess d^2(\partial_1,\partial_1)
	+CW\langle\bar{\nabla}f,\nu\rangle.\nonumber
  \end{align}
Putting together \eqref{etai}, \eqref{etaij}, \eqref{Waprodestim}, and \eqref{waiiest} we 
obtain
  \begin{align*}
      Wa^{ii}\eta_{i;i}&\ge g'\left(-r^{-2}\Delta d^2 +\frac{\abs{\nabla u}^2}{r^2W^2}\Hess d^2 (\partial_1,\partial_1)
      +CW\langle\bar{\nabla}f,\nu\rangle\right)\\
      &\qquad +g''\left(\frac{C^2\abs{\nabla u}^2}{W^2}-\frac{2C}{r^2W^2}\langle\nabla u,\nabla d^2\rangle\right).
  \end{align*}
Hence, by \eqref{WaetaEstim}, we have
  \begin{equation}\label{g2nd}
      g''\left(\frac{C^2\abs{\nabla u}^2}{W^2}-\frac{2C}{r^2W^2}\langle\nabla u,\nabla d^2\rangle\right)
      + g'P-Ng\le 0,
  \end{equation}
where 
  \[
      P=\frac{\abs{\nabla u}^2}{r^2W^2}\Hess d^2 (\partial_1,\partial_1)- \frac{\Delta d^2}{r^2}+
      \frac{f^i(d^2)_i}{r^2} - Cf_t.
  \]
It is easy to see that 
  \[
      \abs{P}\le
      \frac{\abs{\Hess d^2(\partial_1,\partial_1)}+\abs{\Delta d^2}}{r^2}+\frac{2d\abs{f^i d_i}}{r^2}+C\abs{f_t}\le C_0,
  \]
with a constant $C_0=C_0(u(o)-m_u,r,R_{\Omega},\norm{f}_{C^1})$.

In order to obtain an upper bound for $\abs{\nabla u(p)}$, we suppose that 
  \[
      \abs{\nabla u(p)}\ge\frac{16\bigl(m_u-u(o)\bigr)}{r}
  \]
and derive a contradiction.
Since $\abs{\nabla d^2(p)}\le 2r$, we see that
  \[
      \abs{\nabla u(p)}\ge\frac{4\abs{\nabla d^2(p)}}{Cr^2}
  \]
and hence we have 
  \[
      \abs{\nabla u}^2-\frac{2}{Cr^2}\langle\nabla u,\nabla d^2\rangle\ge\frac{1}{2}\abs{\nabla u}^2
  \]
at $p$. Therefore there exists a constant $D$ depending only on $m_u-u(o)$ and $r$ such that
  \[
      \frac{C^2}{W^2}\left(\abs{\nabla u}^2-\frac{2}{Cr^2}\langle\nabla u,\nabla d^2\rangle\right)\ge D>0.
  \]
But now, taking $C_1=C_1(C_0,D,N)$ large enough, we obtain 
  \[
      Dg''(\phi(p))-C_0 g'(\phi(p))-Ng(\phi(p))=(DC_1^2-C_1C_0-N)e^{C_1\phi(p)}+N>0
  \]
which is a contradiction with \eqref{g2nd}. Hence we have
  \[
      \abs{\nabla u(p)}<\frac{16\bigl(m_u-u(o)\bigr)}{r}
  \]
which implies  
  \[
      W(p)\le C_2=1+\frac{16\bigl(m_u-u(o)\bigr)}{r}.
  \]
Since $p$ is a maximum point of $h=\eta W$, we have
  \[
      \left(e^{C_1/2}-1\right)W(o)=\eta(o)W(o)\le\eta(p)W(p)\le C_2\left(e^{C_1}-1\right).
  \]
This proves the case (a).

For the case (b), we assume, in addition, that $u\in C^1(\bar{\Omega})$ and we fix $r=\min\{i(\Omega),\diam(\Omega)\}>0$.
Let $o\in\bar{\Omega}$ and $h=\eta W$ be as above with the same constant $C_1$.  
If a maximum point $p$ of $h$ is an interior point of $\Omega$, the proof for the case (a) applies and we have a desired upper bound
for $\abs{\nabla u(o)}$. On the other hand, if $p\in\partial\Omega$ we have an upper bound 
  \[
      \abs{\nabla u(p)}\le \max_{\partial\Omega}\abs{\nabla u}
  \]
and again we are done. 
\end{proof}

\section{Existence of $f$-minimal graphs}\label{sec_ex}
In this section we will prove Theorem~\ref{existence} and \ref{cont_existence}. Throughout this section we assume that
$\Omega\subset M$ is a bounded open set with $C^{2,\alpha}$ boundary $\p\Omega$. As in 
Subsection~\ref{subsec-height} we denote by $\Omega_0$ the open 
set of all those points of $\Omega$ that can be joined to $\p\Omega$ by a unique minimizing geodesic. We start with the following lemma from
\cite[Lemma 4.2]{MR2351645}; see also \cite[Lemma 5]{dajczer2008killing}. Since our definition of the mean curvature differs by a multiple constant from 
the one used in \cite{MR2351645} and \cite{dajczer2008killing}, we sketch the proof. 
\begin{lem} \label{meancurvlemma}
Let $F=\sup\{\abs{\bar{\n}f(x,s)}\colon (x,s)\in\bar{\Omega}\times\R\}<\infty$ and suppose that 
$\Ric_{\Omega} \geq -F^2/(n-1)$ and $H_{\partial \Omega} \ge F$.
Then for all $x_0\in \Omega_0$ the inward mean curvature $H(x_0)$ of the level set $\{x\in\Omega \colon d(x) = d(x_0)\}$ 
passing through $x_0$ has a lower bound $H(x_0)\geq F$.
\end{lem}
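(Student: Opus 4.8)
The plan is to run the classical Riccati (Laplacian) comparison along the minimizing geodesics issuing normally from $\partial\Omega$. Fix $x_0\in\Omega_0$ and let $\gamma\colon[0,d(x_0)]\to\bar\Omega$ be the unit speed minimizing geodesic with $\gamma(0)\in\partial\Omega$, $\gamma'(0)$ the inward unit normal, and $\gamma(d(x_0))=x_0$, so that $d(\gamma(t))=t$. First I would record the elementary observation that every point $\gamma(t)$, $0\le t\le d(x_0)$, again lies in $\Omega_0$: a subsegment of a unique minimizing geodesic is the unique minimizer to its endpoint, since any competitor of length $t$ from $\partial\Omega$ to $\gamma(t)$ could be spliced with $\gamma|_{[t,d(x_0)]}$ to produce a second minimizing geodesic to $x_0$. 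Hence, by the result of Li--Nirenberg quoted above, $d$ is $C^2$ near $\gamma([0,d(x_0)])$, the level sets $\{d=t\}$ are $C^2$ hypersurfaces with inward unit normal $\nabla d$, and their shape operator is governed along $\gamma$ by the matrix Riccati equation.

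Next I would derive a differential inequality for $\varphi(t):=\Delta d(\gamma(t))$. Writing $S(t)=\Hess d$ along $\gamma$, one has $S'+S^2+R_{\gamma'}=0$, where $R_{\gamma'}(\cdot)=R(\cdot,\gamma')\gamma'$; since $S(\gamma')=0$, restricting to the $(n-1)$-dimensional tangent space of $\{d=t\}$ and taking traces yields $\varphi'(t)=-\operatorname{tr}\bigl(S(t)^2\bigr)-\Ric(\gamma',\gamma')$. By Cauchy--Schwarz, $\operatorname{tr}(S^2)\ge\varphi^2/(n-1)$, and by hypothesis $\Ric(\gamma',\gamma')\ge-F^2/(n-1)$, so
\[
\varphi'(t)\le-\frac{\varphi(t)^2}{n-1}+\frac{F^2}{n-1}=\frac{1}{n-1}\bigl(F^2-\varphi(t)^2\bigr).
\]
The initial value is $\varphi(0)=\Delta d|_{\partial\Omega}=-H_{\partial\Omega}\le-F$, using the sign convention $\Delta d=-H$ together with the assumption $H_{\partial\Omega}\ge F$.

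Finally I would integrate this inequality with an integrating factor. Set $g(t)=\varphi(t)+F$, so $g(0)\le0$ and
\[
g'(t)\le\frac{1}{n-1}\bigl(F-\varphi(t)\bigr)\bigl(F+\varphi(t)\bigr)=c(t)\,g(t),\qquad c(t):=\frac{F-\varphi(t)}{n-1},
\]
with $c$ continuous on $[0,d(x_0)]$. Then $\frac{d}{dt}\bigl(e^{-\int_0^tc}\,g\bigr)=e^{-\int_0^tc}\,(g'-cg)\le0$, so $e^{-\int_0^tc}g(t)\le g(0)\le0$ and hence $g(t)\le0$ throughout. Evaluating at $t=d(x_0)$ gives $\varphi(d(x_0))\le-F$, i.e. the inward mean curvature of the level set through $x_0$ satisfies $H(x_0)=-\Delta d(x_0)=-\varphi(d(x_0))\ge F$, as claimed.

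The two points that need care are precisely the bookkeeping ones above: verifying that the whole minimizing segment stays inside $\Omega_0$ so that $d$ and the shape operators of the level sets are regular along $\gamma$, and handling the a priori unknown sign of $F-\varphi$ in the comparison step — which is why I would phrase the last step through the integrating factor $e^{-\int_0^tc}$ rather than comparing $\varphi$ directly with the stationary solution $-F$. The Riccati identity and the trace estimate are then entirely routine.
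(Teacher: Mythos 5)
Your proof is correct and follows the same strategy as the paper's: trace the matrix Riccati equation for the shape operator along the normal minimizing geodesic, apply Cauchy--Schwarz to $\operatorname{tr}(S^2)$ and the Ricci lower bound to get the scalar Riccati inequality, then propagate the boundary condition $H_{\partial\Omega}\ge F$ inward. The only real difference is presentational: where the paper concludes somewhat tersely that ``$H(0)\ge F$ implies $H'(t)\ge 0$,'' you make the ODE comparison airtight with the integrating-factor computation on $g=\varphi+F$, and you also spell out the routine observation that the geodesic segment stays in $\Omega_0$; both are welcome clarifications of the same argument rather than a different route.
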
 
\begin{proof}
Denote by $H(t)$ the inward mean curvature of the level set $\Gamma_t=\{x\in\Omega \colon d(x) = t\}$ at the point
which lies on the unit speed minimizing geodesic $\gamma$ joining $\gamma(0) \in \p\Omega$ to $x_0$. Denote by $N=\dot{\gamma}_{t}$ the
inward unit normal to $\Gamma_t$ and by $S_t$ the shape operator, $S_t(X) = -\nabla_{X} N$, of the level set $\Gamma_t$. As in \cite{dajczer2008killing} 
we obtain the Riccati equation
   \[
     S^{\prime}_{t} = S^{2}_{t} + R_t,
   \]
where $R_t=R(\cdot,\dot{\gamma}_t)\dot{\gamma}_t$.
Trace and derivative commute, but because of the term $S^{2}_{t}$, we need to substitute $s = \tr S_t/(n-1)$
in order to get similar differential equation for the traces. Hence we have
   \[
     s' = s^2 + r,
   \]
where $r$ satisfies $r \ge \Ric(\dot{\gamma}_t,\dot{\gamma}_t)/(n-1)$. In other words,
   \[
     \frac{\tr S^{\prime}_{t}}{n-1} \ge \left(\frac{\tr S_t}{n-1} \right)^2 + \frac{1}{n-1} \Ric(\dot{\gamma}_t,\dot{\gamma}_t).
   \]

Since $H(t) = \tr S_t$, we obtain the estimate
   \[
     \frac{H'(t)}{n-1} \ge \left(\frac{H(t)}{n-1}\right)^2 +
         \frac{1}{n-1} \Ric\big(\dot{\gamma}_t,\dot{\gamma}_t\big) \ge
     \frac{H^2(t)}{(n-1)^2} - \frac{F^2}{(n-1)^2}.
   \]
On the boundary we have $H(0) = H_{\p\Omega} \ge F$ 
which implies that $H'(t) \ge 0$ and hence the claim follows.
\end{proof}  

\begin{proof}[Proof of Theorem \ref{existence}]
In order to prove Theorem~\ref{existence} we assume that the given boundary value function is extended to a function 
$\varphi\in C^{2,\alpha}(\bar{\Omega})$ and we consider a family of Dirichlet problems 
\begin{equation} \label{eqfamily}
 \begin{cases}
\dv \dfrac{\nabla u}{\sqrt{1+|\nabla u|^2}} - \tau\ang{\nb f,\nu}=0 \quad \text{in } \Omega, \\
  u = \tau\varphi \quad \text{in } \p\Omega,\ 0\le \tau\le 1.
 \end{cases}
\end{equation}
By Lemma~\ref{meancurvlemma},
\[
H(x)\ge F\ge \sup_{\bar{\Omega}\times\R}\abs{\bar{\n}(\tau f)}
\]
for all $x\in\Omega_0$ and for all $\tau\in [0,1]$. 
Hence \emph{if} $u\in C^{2,\alpha}(\bar{\Omega})$ is a solution of \eqref{eqfamily} for some $\tau\in [0,1]$, it follows from
Lemmata~\ref{heightestim}, \ref{boundestim}, and \ref{globestim} that
 \[
\norm{u}_{C^1(\bar{\Omega})}\le C 
 \] 
 with a constant $C$ that is independent of $\tau$. The Leray-Schauder method \cite[Theorem 13.8]{GilTru} then yields a solution to the Dirichlet
 problem \eqref{eqfamily} for all $\tau\in [0,1]$. In particular, with $\tau=1$ we obtain a solution to the original Dirichlet problem.
 \end{proof}

\begin{proof}[Proof of Theorem \ref{cont_existence}]
Let $\varphi \in C(\p\Omega)$ and let $\varphi^{\pm}_k\in C^{2,\alpha}(\p\Omega)$ be two monotonic
sequence converging uniformly on $\p\Omega$ to $\varphi$ from above and from below, respectively. Denote
        $$
                F^+ = \sup_{\bar{\Omega}\times\R} |\nb f| \quad \text{and} \quad F^- = -F^+.
        $$
By Theorem~\ref{existence} there are functions $u^{\pm}_{k},v^{\pm}_{k}\in C^{2,\alpha}(\bar{\Omega})$ such that
$u^{\pm}_{k}|\p\Omega=v^{\pm}_{k}|\p\Omega=\varphi_k^\pm$ and
\begin{align*}
&a^{ij}(x,\n u^{\pm}_{k})(u^{\pm}_{k})_{i;j}-\langle\nb f,\nu^{\pm}_{k}\rangle=0\\
&a^{ij}(x,\n v^{\pm}_{k})(v^{\pm}_{k})_{i;j}+F^{\pm}=0
\end{align*}
in $\Omega$, where $a^{ij}$ is as in \eqref{aijdef} and $\nu^{\pm}_{k}$ is the downward unit normal to the graph of $u^{\pm}_{k}$.
Since 
\begin{align*}
a^{ij}(x,\n v^{+}_{k})(v^{+}_{k})_{i;j} + F^{-} & \le a^{ij}(x,\n v^{+}_{k})(v^{+}_{k})_{i;j} + F^{+} = 0 \\
&=
a^{ij}(x,\n v^{-}_{\ell})(v^{-}_{\ell})_{i;j} + F^{-}
\end{align*}
and $v^{+}_{k}|\p\Omega \ge v^{-}_{\ell}|\p\Omega$ for all $k,\ell$, we obtain from the comparison principle \cite[Theorem 10.1]{GilTru} applied to the 
operator $a^{ij}+F^{-}$ that 
\[
v^{-}_{\ell}\le v^{+}_{k}\quad\text{in }\bar{\Omega}.
\]
On the other hand, since $\varphi^{+}_{k+1}\le \varphi^{+}_{k}$ and $\varphi^{-}_{\ell}\le\varphi^{-}_{\ell+1}$ on $\p\Omega$, we have again by the comparison principle that
\begin{equation}\label{v_kOrder}
v_1^- \le \cdots \le v_{\ell}^- \le v_{\ell+1}^- \cdots\le  v_{k+1}^+ \le  v_{k}^+ \cdots \le v_1^+.
\end{equation}      
Similarly, since
\begin{align*}
a^{ij}(x,\n v^{+}_{k})(v^{+}_{k})_{i;j} - \langle \nb f,\nu^{+}_k\rangle &\le
a^{ij}(x,\n v^{+}_{k})(v^{+}_{k})_{i;j} - F^{-} = 0 \\
&=
a^{ij}(x,\n u^{+}_{k})(u^{+}_{k})_{i;j} - \langle \nb f,\nu^{+}_k\rangle
\end{align*}
and $v^{+}_k|\p\Omega =u^{+}_k|\p\Omega$, we get
\[
u^{+}_{k}\le v^{+}_{k}\quad\text{in }\bar{\Omega}.
\]
Similar reasoning implies that $v^{-}_k\le u^{-}_k$, and therefore
\begin{equation}\label{v_ku_kOrder}
v^{-}_{k}\le u^{\pm}_{k}\le v^{+}_k\quad\text{in }\bar{\Omega}.
\end{equation}
Hence the sequences $u_k^\pm,v_k^\pm$ have uniformly bounded $C^0$ norms and
the local interior gradient estimate (Lemma~\ref{globestim}) together with \cite[Corollary 6.3]{GilTru} imply that
the sequences $u_k^\pm,v_k^\pm$ have equicontinuous $C^{2,\alpha}$ 
norms on compact subsets $K\subset \Omega$. 
Taking an exhaustion of $\Omega$ by
compact sets we obtain, with a diagonal argument, that $u_k^\pm$ and $v_k^\pm$
contain subsequences that converge uniformly in compact subsets to functions 
$u,v^\pm \in C^2(\Omega)$ with respect to the $C^2$ norm. Moreover, we have
        \[
                a^{ij}(x,\nabla u)u_{i;j} - \ang{\nb f,\nu}=0 
                \quad \text{and} \quad a^{ij}(x,\nabla v^{\pm})v^\pm_{i;j} + F^\pm =0.
        \]
Since $v_k^\pm |\p\Omega = \varphi_k^\pm$ convergences to $\varphi$, \eqref{v_kOrder}
implies that $v^\pm$ extends continuously to the boundary $\p\Omega$ and $v^\pm |\p\Omega=\varphi$.
In turn, this and \eqref{v_ku_kOrder} give that $u$ extends continuously to $\p\Omega$ with
$u|\p\Omega = \varphi$. Furthermore, because $f\in C^2 (M\times\R)$, it follows that $u\in C^{2,\alpha}(\Omega)
\cap C(\bar{\Omega})$ (\cite[Theorem 6.17]{GilTru}).
\end{proof}

\section{Dirichlet problem at infinity}\label{DPat8}

In this section we assume that $M$ is a Cartan-Hadamard manifold of dimension $n\ge 2$, $\partial_{\infty}M$ is the asymptotic boundary of $M$, 
and $\bar{M}=M\cup\partial_{\infty}M$ the compactification of $M$ in the cone topology.
Recall that the asymptotic boundary is defined as the set of all equivalence classes of unit speed
geodesic rays in $M$; two such rays $\gamma_{1}$ and $\gamma_{2}$ are
equivalent if $\sup_{t\ge0}d\bigl(\gamma_{1}(t),\gamma_{2}(t)\bigr)< \infty$. The equivalence class of $\gamma$ is denoted by $\gamma(\infty)$.
For each $x\in M$ and $y\in\bar{M}\setminus\{x\}$ there exists a unique unit
speed geodesic $\gamma^{x,y}\colon\mathbb{R}\to M$ such that $\gamma^{x,y}_{0}=x$ 
and $\gamma^{x,y}_{t}=y$ for some $t\in(0,\infty]$. If $v\in
T_{x}M\setminus\{0\}$, $\alpha>0$, and $r>0$, we define a cone
\[
C(v,\alpha)=\{y\in\bar M\setminus\{x\}:\sphericalangle(v,\dot\gamma^{x,y}_{0})<\alpha\}
\]
and a truncated cone
\[
T(v,\alpha,r)=C(v,\alpha)\setminus\bar B(x,r),
\]
where $\sphericalangle(v,\dot\gamma^{x,y}_{0})$ is the angle between vectors
$v$ and $\dot\gamma^{x,y}_{0}$ in $T_{x} M$. All cones and open balls in $M$
form a basis for the cone topology on $\bar M$.

Throughout this section, we assume that the sectional curvatures of $M$ are bounded from below and above
by
    \begin{equation}
    \label{curv-bound-gen}
     -(b\circ \rho)^2(x) \le K(P_x) \le -(a\circ \rho)^2 (x)
    \end{equation}
for all $x\in M$,  
where $\rho(x) = d(o,x)$ is the distance to a fixed point $o\in M$ and $P_x$ is any 2-dimensional subspace of $T_xM$. The functions
$a,b\colon [0,\infty) \to [0,\infty)$ are assumed to be smooth such that $a(t)=0$ and $b(t)$ is constant for $t\in [0,T_0]$ for some $T_0>0$, 
and $b\ge a$. Furthermore, we assume that $b$ is monotonic and that there exist positive constants 
$T_1, C_1, C_2, C_3$, and $Q\in (0,1)$ such that 
  \begin{align}
    \tag{A1}\label{A1}
    a(t)\begin{cases}=C_1t^{-1}&\text{if $b$ is decreasing,}\\ 
    \ge C_1t^{-1}&\text{if $b$ is increasing}\\ \end{cases} 
  \end{align}
for all $t\ge T_1$ and
  \begin{align}
    \tag{A2}\label{A2}
    a(t)&\le C_2, \\
    \tag{A3}\label{A3}
    b(t+1)&\le C_2b(t), \\
    \tag{A4}\label{A4}
    b(t/2)&\le C_2b(t), \\
    \tag{A5}\label{A5}
    b(t)&\ge C_3(1+t)^{-Q}
  \end{align}
for all $t\ge 0$.
In addition, we assume that 
  \begin{align}
    \tag{A6}\label{A6}
    &\lim_{t\to\infty}\frac{b'(t)}{b(t)^2}=0 
  \end{align}
and that there exists a constant $C_4>0$ such that
  \begin{align}
    \tag{A7}\label{A7}
    &\lim_{t\to\infty}\frac{t^{1+C_4}b(t)}{f_a'(t)}=0.
  \end{align}
It can be checked from \cite{HoVa} or from \cite{casterasholopainenripoll1} that the curvature
bounds in Corollary \ref{thm1} and Corollary \ref{HVkor2_RT} satisfy the assumptions \eqref{A1}-\eqref{A7}.

\subsection{Construction of a barrier}\label{subsec_barrier_constr}
The curvature bounds \eqref{curv-bound-gen} are needed to control the first two derivatives of the ``barrier'' functions that we will construct in this subsection. Recall from the introduction that for a smooth function 
$k\colon [0,\infty)\to [0,\infty)$, we denote by $f_k \colon [0,\infty) \to \R$
the smooth non-negative solution to the initial value problem
    \begin{equation*}
      \left\{
	\begin{aligned}
	  f_k(0)&=0, \\
	  f_k'(0)&=1, \\
	  f_k''&=k^2f_k.
	\end{aligned}
      \right.
    \end{equation*}
Following \cite{HoVa}, we construct a barrier function for each boundary point $x_0\in\partial_{\infty}M$. 
Towards this end let $v_{0}=\dot\gamma^{o,x_{0}}_{0}$ be the initial (unit) vector of
the geodesic ray $\gamma^{o,x_{0}}$ from a fixed point $o\in M$ and define a
function $h:\partial_{\infty}M\to\mathbb{R}$,
\begin{equation}
\label{eq:hoodef}h(x)=\min\bigl(1,L\sphericalangle(v_{0},\dot\gamma^{o,x}_{0})\bigr),
\end{equation}
where $L\in(8/\pi,\infty)$ is a constant. Then we define a crude extension 
$\tilde h\in C(\bar{M})$, with $\tilde h|\partial_{\infty}M=h$, by setting
\begin{equation}
\label{eq:hoodeftilde}\tilde h(x)=\min\Bigl(1,\max\bigl(2-2\rho
(x),L\sphericalangle(v_{0},\dot\gamma^{o,x}_{0})\bigr)\Bigr).
\end{equation}
Finally, we smooth out $\tilde{h}$ to get an extension
$h\in C^{\infty}(M)\cap C(\bar{M})$ with controlled first and second order 
derivatives. For that purpose, we fix $\chi\in C^{\infty}(\R)$ such that 
$0\le\chi\le 1$, $\spt\chi\subset[-2,2]$, and
$\chi\vert[-1,1]\equiv1$. Then for any function $\varphi\in C(M)$ we define
functions $F_{\varphi}\colon M\times M\to\mathbb{R},\ {\mathcal{R}}
(\varphi)\colon M\to M$, and ${\mathcal{P}}(\varphi)\colon M\to\mathbb{R}$ by
\begin{align*}
F_{\varphi}(x,y)  &  =\chi\bigl(b(\rho(y))d(x,y)\bigr)\varphi(y),\\
{\mathcal{R}}(\varphi)(x)  &  =\int_{M}F_{\varphi}(x,y) dm(y),\ \text{ and}\\
{\mathcal{P}}(\varphi)  &  =\frac{{\mathcal{R}}(\varphi)}{{\mathcal{R}}(1)},
\end{align*}
where
\[
{\mathcal{R}}(1)(x)=\int_{M}\chi\bigl(b(\rho(y))d(x,y)\bigr)dm(y)>0.
\]
If $\varphi\in C(\bar M)$, we extend ${\mathcal{P}}(\varphi)\colon
M\to\mathbb{R}$ to a function $\bar{M}\to\mathbb{R}$ by setting 
${\mathcal{P}}(\varphi)(x)=\varphi(x)$ whenever $x\in M(\infty)$. Then the extended
function ${\mathcal{P}}(\varphi)$ is $C^{\infty}$-smooth in $M$ and continuous
in $\bar{M}$; see \cite[Lemma 3.13]{HoVa}. In particular, applying
${\mathcal{P}}$ to the function $\tilde{h}$ yields an appropriate smooth
extension
\begin{equation}
\label{extend_h}
h:={\mathcal{P}}(\tilde{h})
\end{equation}
of the original function $h\in C\bigl(\partial_{\infty}M\bigr)$ that was defined in \eqref{eq:hoodef}.

We denote
\[
\Omega=C(v_{0},1/L)\cap M \ \text{ and }\ \ell\Omega=C(v_{0},\ell/L)\cap M
\]
for $\ell>0$ and collect various constants and functions together to a data
\[
C=(a,b,T_{1},C_{1},C_{2},C_{3},C_{4},Q,n,L).
\]
Furthermore, we denote by $\|\Hess_{x} u\|$ the norm of the Hessian of a
smooth function $u$ at $x$, that is
\[
\|\Hess_{x} u\|=\sup_{ \overset{ \mbox{\scriptsize$X\in T_xM$} }{\lvert X
\rvert\le1}}\lvert\Hess u(X,X) \rvert.
\]
The following lemma gives the desired estimates for derivatives of $h$.
We refer to \cite{HoVa} for the proofs of these estimates; see also \cite{CHR3}.

\begin{lem}\cite[Lemma 3.16]{HoVa}\label{arvio_lause}
There exist constants $R_1=R_1(C)$ and $c_1=c_1(C)$ such that the extended function 
$h\in C^\infty(M)\cap C(\bar M)$ in \eqref{extend_h}
satisfies 
\begin{equation}\label{arvio1}
  \begin{split}
  |\nabla h(x)|&\le c_1\frac{1}{(f_a\circ\rho)(x)}, \\
  \|\Hess_x h\|&\le c_1\frac{(b\circ\rho)(x)}{(f_a\circ\rho)(x)}, \\
  \end{split}
  \end{equation}
for all $x\in 3\Omega\setminus B(o,R_1)$. 
In addition,
\[h(x)=1
  \]
for every $x\in M\setminus\bigl(2\Omega\cup B(o,R_1)\bigr)$.
\end{lem}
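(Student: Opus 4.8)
The plan is to follow the construction in \cite[Lemma 3.16]{HoVa}: first control the \emph{crude} extension $\tilde h$ through oscillation estimates on small balls, and then transfer these to $h=\mathcal{P}(\tilde h)$ by differentiating under the integral sign, using crucially that $\mathcal{P}$ reproduces constants, i.e. $\mathcal{P}(1)\equiv1$. Write $\theta(x)=\sphericalangle(v_0,\dot\gamma^{o,x}_0)$. For $x\in 3\Omega\setminus B(o,R_1)$ with $R_1$ large one has $\rho(x)\ge1$, so for $y$ near $x$ the branch $2-2\rho(y)$ is negative and $\tilde h=\min(1,L\theta)$ locally; hence it suffices to estimate $\osc(\theta,B(x,s))$ for $s$ comparable to $1/(b\circ\rho)(x)$. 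Since $\theta$ is constant along the geodesics issuing from $o$, and since by Rauch comparison with the upper bound $K\le-(a\circ\rho)^2$ the map $x\mapsto\dot\gamma^{o,x}_0$ into the unit sphere $S_oM$ is locally Lipschitz with constant $\lesssim 1/(f_a\circ\rho)$ away from $o$, the function $\theta$ is smooth off the axis with $|\nabla\theta(y)|\le 1/(f_a\circ\rho)(y)$ there; consequently $\osc(\tilde h,B(x,s))\le cLs/(f_a\circ\rho)(x)$ provided $s\ll(f_a\circ\rho)(x)/L$. For $x\in M\setminus(2\Omega\cup B(o,R_1))$ one has $L\theta(x)\ge2$ and $\rho(x)\ge1$, and since the kernel of $\mathcal{R}$ at $x$ is supported in a ball of radius $\lesssim 1/(b\circ\rho)(x)\ll(f_a\circ\rho)(x)/L$ (for $R_1$ large, using \eqref{A1} and \eqref{A5}), the whole support lies in $\{\tilde h\equiv1\}$, so $h(x)=\mathcal{P}(1)(x)=1$, which is the last assertion.

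Next one records the size of the convolution kernel. For $\rho(x)\ge R_1$ the set $\{y:\ b(\rho(y))d(x,y)\le2\}$ is contained in a ball $B(x,c/(b\circ\rho)(x))$: from $d(x,y)\le 2/b(\rho(y))$ and $\rho(y)\ge\rho(x)-d(x,y)$ one bootstraps, using \eqref{A3}--\eqref{A5}, that $b(\rho(y))$ stays comparable to $b(\rho(x))$ on that set. A Bishop--Gromov comparison with the lower bound $K\ge-(b\circ\rho)^2$ gives volume doubling at this scale, so, because $\chi\equiv1$ on $[-1,1]$, $\mathcal{R}(1)(x)$ is comparable to $V(x):=\vol B(x,c/(b\circ\rho)(x))$; differentiating the kernel once in $x$ moreover yields $|\nabla_x\mathcal{R}(1)(x)|\lesssim(b\circ\rho)(x)\,V(x)$.

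For the derivative bounds, treat $c:=h(x)$ as a constant. Since $\mathcal{P}(1)\equiv1$ we get $\nabla h(x)=\mathcal{R}(1)(x)^{-1}\nabla_x\mathcal{R}(\tilde h-c)(x)$ and $\Hess_x h=\mathcal{R}(1)(x)^{-1}\Hess_x\mathcal{R}(\tilde h-c)(x)-\mathcal{R}(1)(x)^{-1}\big(\nabla h\otimes\nabla_x\mathcal{R}(1)+\nabla_x\mathcal{R}(1)\otimes\nabla h\big)$. Differentiating $F_{\tilde h-c}(x,y)=\chi(b(\rho(y))d(x,y))(\tilde h(y)-c)$ under the integral only hits the $\chi$-factor, producing at first order a term of size $\lesssim b(\rho(y))\,|\tilde h(y)-c|$ and at second order terms of size $\lesssim b(\rho(y))^2|\tilde h(y)-c|$ and $\lesssim b(\rho(y))\,\|\Hess_x d(x,y)\|\,|\tilde h(y)-c|$, where $\|\Hess_x d(x,y)\|\lesssim(b\circ\rho)(x)$ on the (small) support by the Hessian comparison with $K\ge-(b\circ\rho)^2$. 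Inserting $|\tilde h(y)-c|\le 2\,\osc(\tilde h,B(x,c/(b\circ\rho)(x)))\lesssim\big((b\circ\rho)(x)(f_a\circ\rho)(x)\big)^{-1}$ from the first paragraph, $b(\rho(y))\asymp(b\circ\rho)(x)$ on the support, and $\mathcal{R}(1)(x)\asymp V(x)$, one obtains
\[
|\nabla h(x)|\lesssim\frac{(b\circ\rho)(x)}{V(x)}\cdot\frac{1}{(b\circ\rho)(x)(f_a\circ\rho)(x)}\cdot V(x)=\frac{c_1}{(f_a\circ\rho)(x)},
\]
and, using also $|\nabla_x\mathcal{R}(1)(x)|/\mathcal{R}(1)(x)\lesssim(b\circ\rho)(x)$ for the cross terms,
\[
\|\Hess_x h\|\lesssim\frac{(b\circ\rho)(x)^2}{V(x)}\cdot\frac{1}{(b\circ\rho)(x)(f_a\circ\rho)(x)}\cdot V(x)+(b\circ\rho)(x)\cdot\frac{c_1}{(f_a\circ\rho)(x)}\lesssim\frac{c_1(b\circ\rho)(x)}{(f_a\circ\rho)(x)},
\]
with $R_1$ and $c_1$ depending only on the data $C$.

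The delicate point is the first paragraph: making precise, with constants depending only on $C$, the two geometric inputs, namely that $\theta$ is $1/(f_a\circ\rho)$-Lipschitz off the axis and that $\|\Hess d(x,\cdot)\|\lesssim(b\circ\rho)(x)$ on the kernel support. Both rest on Rauch/Hessian comparison with \emph{variable} curvature bounds, and they require checking --- this is exactly where \eqref{A1}--\eqref{A5} enter --- that for $\rho\ge R_1$ the scales $1/b(\rho)$, $f_a(\rho)/L$ and $\rho$ are ordered so that the convolution only ``sees'' a region where these comparisons are valid and $\tilde h$ has the expected smooth branch. Once these are in hand, the remaining differentiation-under-the-integral estimates are routine.
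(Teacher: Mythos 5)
The paper does not actually prove this lemma: it simply cites \cite[Lemma 3.16]{HoVa} (and \cite{CHR3}), so there is no in-text proof to compare against. Your reconstruction nevertheless follows the strategy of the cited reference faithfully, and the key steps are all in the right place: the algebraic identities $\nabla h(x)=\mathcal{R}(1)(x)^{-1}\nabla_x\mathcal{R}(\tilde h-c)(x)$ and the analogous Hessian identity obtained from $h\,\mathcal{R}(1)=\mathcal{R}(\tilde h)$ are correct; the reduction of the second-order kernel estimate to $\|\Hess_x d(x,y)\|\lesssim(b\circ\rho)(x)$ is legitimate precisely because $\chi'$ is supported on the annulus $d(x,y)\ge 1/b(\rho(y))$, where $\coth\bigl(b\,d\bigr)$ is bounded; and the comparability $\mathcal{R}(1)\asymp V(x)$ via Bishop--Gromov and the bootstrap $b(\rho(y))\asymp b(\rho(x))$ using \eqref{A3}--\eqref{A4} are the correct geometric inputs. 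Your final caveat is apt: the remaining work is verifying that \eqref{A1}--\eqref{A5} order the scales $1/b(\rho)$, $f_a(\rho)/L$ and $\rho$ so that, for $\rho\ge R_1$, the kernel support lies inside the region where $\tilde h=\min(1,L\theta)$ and the comparison estimates apply with uniform constants; this is exactly the content of the cited argument. One minor precision: $\theta$ is only Lipschitz (not $C^1$) on the axis $\theta=0$, so the bound $|\nabla\theta|\lesssim 1/(f_a\circ\rho)$ should be read as a Lipschitz bound rather than a pointwise gradient bound; your oscillation estimate uses only the Lipschitz constant, so the argument is unaffected.
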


We define a function $F\colon M\to [0,\infty)$ and an elliptic operator $\tilde{Q}$ by setting
\begin{equation}\label{defF}
F(x)=\sup_{t\in\R}\abs{\bar{\nabla}f(x,t)}
\end{equation}
and
\begin{equation}\label{defqtilde}
\tilde{Q}[v]=\dv\frac{\nabla v}{\sqrt{1+\abs{\n v}^2}}+F(x).
\end{equation}
Let then $A>0$ be a fixed constant. We aim to show that 
\begin{equation}\label{varphi_def}
\psi=A(R_{3}^{\delta}\rho^{-\delta}+h)
\end{equation}
is a supersolution $\tilde{Q}[\psi]< 0$
in the set $3\Omega\setminus\bar{B}(o,R_{3})$, where $\delta>0$ and $R_{3}>0$ are constants that will be specified later and $h$ is the extended function defined in \eqref{extend_h}. 
We shall make use of the following estimates obtained in \cite{HoVa}:
\begin{lem}\cite[Lemma 3.17]{HoVa}\label{perusta}
There exist constants $R_2=R_2(C)$ and $c_2=c_2(C)$ with the following property.
If $\delta\in(0,1)$, then
\[\begin{split}
  |\nabla h|&\le c_2/(f_a\circ\rho), \\
  \|\Hess h\|&\le c_2\rho^{-C_4-1}(f_a'\circ\rho)/(f_a\circ\rho), \\
  |\nabla\langle\nabla h,\nabla h\rangle|&\le c_2\rho^{-C_4-2}(f_a'\circ\rho)/(f_a\circ\rho), \\
  |\nabla\langle\nabla h,\nabla(\rho^{-\delta})\rangle|&\le c_2\rho^{-C_4-2}(f_a'\circ\rho)/(f_a\circ\rho), \\
  \nabla\bigl\langle\nabla(\rho^{-\delta}),\nabla(\rho^{-\delta})\bigr\rangle
  &=-2\delta^2(\delta+1)\rho^{-2\delta-3}\nabla\rho
  \end{split}\]
in the set $3\Omega\setminus B(o,R_2)$.
\end{lem}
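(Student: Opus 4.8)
The plan is to derive all five estimates from Lemma~\ref{arvio_lause}, hypothesis \eqref{A7}, and two elementary remarks about $f_a$. Since $f_a''=a^2f_a\ge0$ with $f_a(0)=0$ and $f_a'(0)=1$, the function $f_a$ is nonnegative, nondecreasing and convex, so $f_a'(t)\ge1$ and $f_a(t)=\int_0^t f_a'\le t\,f_a'(t)$; hence $f_a(t)\ge t$ and $(f_a'/f_a)(t)\ge1/t$ for all $t>0$. Moreover, \eqref{A5} together with \eqref{A7} gives $f_a'(t)\ge c\,t^{1+C_4-Q}$ for $t$ large, and since $Q\in(0,1)$ this dominates $t^{C_4-\delta}$ for every $\delta\in(0,1)$. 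These facts are exactly what one needs to trade powers of $f_a\circ\rho$ and of $\rho^{-\delta}$ for powers of $\rho$ uniformly in $\delta$.

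I would first dispose of the two bounds on $h$. The first inequality is literally the first line of \eqref{arvio1}. For the Hessian bound, \eqref{arvio1} gives $\|\Hess_xh\|\le c_1(b\circ\rho)(x)/(f_a\circ\rho)(x)$ on $3\Omega\setminus B(o,R_1)$, while \eqref{A7} provides a radius $R_1'\ge R_1$ with $b(t)\le t^{-C_4-1}f_a'(t)$ for $t\ge R_1'$; substituting $t=\rho(x)$ yields $\|\Hess_xh\|\le c_1\rho^{-C_4-1}(f_a'\circ\rho)/(f_a\circ\rho)$ on $3\Omega\setminus B(o,R_1')$.

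For the two inner-product bounds I would use the Leibniz rule: for smooth $u,v$ one has, as vector fields, $\nabla\langle\nabla u,\nabla v\rangle=\Hess u(\nabla v,\cdot)^\sharp+\Hess v(\nabla u,\cdot)^\sharp$, hence $|\nabla\langle\nabla u,\nabla v\rangle|\le\|\Hess u\|\,|\nabla v|+\|\Hess v\|\,|\nabla u|$. With $u=v=h$, the two bounds above together with $1/(f_a\circ\rho)\le1/\rho$ give the third estimate. With $u=h$ and $v=\rho^{-\delta}$ one must control $\rho^{-\delta}$: since $|\nabla\rho|\equiv1$ and $\Hess\rho(\nabla\rho,\cdot)=0$, a one-variable computation gives $\nabla(\rho^{-\delta})=-\delta\rho^{-\delta-1}\nabla\rho$ and $\Hess(\rho^{-\delta})=\delta(\delta+1)\rho^{-\delta-2}\,d\rho\otimes d\rho-\delta\rho^{-\delta-1}\Hess\rho$, so $|\nabla(\rho^{-\delta})|=\delta\rho^{-\delta-1}\le\rho^{-1}$ and $\|\Hess(\rho^{-\delta})\|\le\delta(\delta+1)\rho^{-\delta-2}+\delta\rho^{-\delta-1}\|\Hess\rho\|$ for $\rho\ge1$. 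The Hessian comparison theorem in a Cartan--Hadamard manifold bounds $\|\Hess_x\rho\|\le(f_b'/f_b)(\rho(x))$, which by the assumptions on $b$ is $\le C\bigl((b\circ\rho)(x)+\rho(x)^{-1}\bigr)$ for $\rho(x)$ large. Inserting all of this into the Leibniz bound and using, in turn, \eqref{A7} to eliminate $b\circ\rho$, then $f_a(\rho)\ge\rho$ and $\rho^{-\delta}\le1$, and finally the lower bound $f_a'(\rho)\ge c\,\rho^{1+C_4-Q}$ to absorb the surviving term $\rho^{-\delta-2}/(f_a\circ\rho)$, one obtains the fourth estimate after possibly enlarging the radius. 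The last identity is immediate: $\langle\nabla(\rho^{-\delta}),\nabla(\rho^{-\delta})\rangle=\delta^2\rho^{-2\delta-2}$ because $|\nabla\rho|\equiv1$, and differentiating this radial function gives $-2\delta^2(\delta+1)\rho^{-2\delta-3}\nabla\rho$.

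Finally I would take $R_2$ to be the largest of $R_1$, $R_1'$, and the threshold beyond which both the comparison bound on $\|\Hess\rho\|$ and the inequality $f_a'(t)\ge c\,t^{1+C_4-Q}$ hold; then all five assertions hold on $3\Omega\setminus B(o,R_2)$ with constants independent of $\delta\in(0,1)$. There is no deep difficulty here, since the substantive curvature-comparison content is already packaged in Lemma~\ref{arvio_lause}; the only points needing care are the Hessian comparison for $\rho$ and checking that \eqref{A5} and \eqref{A7} together render the $\delta$-dependent powers harmless uniformly in $\delta$.
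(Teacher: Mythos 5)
The paper does not prove this lemma; it is cited verbatim from \cite[Lemma 3.17]{HoVa}, with a pointer to that reference (and to \cite{CHR3}) for the proof, so there is no in-paper argument to compare against. Your reconstruction is correct in outline and is a plausible route: the first estimate is literally the first line of Lemma~\ref{arvio_lause}, the second follows from the second line of \eqref{arvio1} combined with \eqref{A7} in the form $b(t)\le t^{-C_4-1}f_a'(t)$ for large $t$, the third and fourth follow from the Leibniz identity $\nabla\langle\nabla u,\nabla v\rangle=\Hess u(\nabla v,\cdot)^\sharp+\Hess v(\nabla u,\cdot)^\sharp$ together with the elementary facts $f_a(t)\ge t$, $f_a'(t)\ge 1$, and the lower bound $f_a'(t)\ge ct^{1+C_4-Q}$ extracted from \eqref{A5} and \eqref{A7}, and the fifth is a one-line radial computation.

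The one step that deserves to be flagged is your appeal to a bound of the shape $\|\Hess_x\rho\|\le (f_b'/f_b)(\rho(x))\le C\bigl(b(\rho(x))+\rho(x)^{-1}\bigr)$. The first inequality is the Hessian comparison theorem under the lower curvature bound $K\ge -(b\circ\rho)^2$, which is fine; but the second is not immediate from the assumptions \eqref{A1}--\eqref{A7} and needs a short Riccati-type argument using \eqref{A6} (i.e.\ $b'/b^2\to 0$) to show $f_b'/f_b\sim b$ asymptotically. This is a standard ODE fact and is indeed available in \cite{HoVa}, but as stated you invoke it without justification. Once that sub-lemma is granted, the rest of your estimates close up, and the uniformity in $\delta\in(0,1)$ is handled correctly by noting $\delta\rho^{-\delta}\le 1$ and $\rho^{C_4-\delta}\le\rho^{1+C_4-Q}$ for $\rho\ge 1$.
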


As in \cite{HoVa} we denote
\[
\phi_{1}=\frac{1+\sqrt{1+4C_{1}^{2}}}{2}>1,\quad\text{and}\quad\delta_{1}
=\min\left\lbrace C_{4},\frac{-1+(n-1)\phi_{1}}{1+(n-1)\phi_{1}}\right\rbrace
\in(0,1),
\]
where $C_{1}$ and $C_{4}$ are constants defined in \eqref{A1} and \eqref{A7},
respectively. 
\begin{lem}\label{PsiBarrierLemma}
 Let $A>0$ be a fixed constant and $h$ the 
function defined in \eqref{extend_h}. Assume that the function $F$ defined in \eqref{defF} satisfies
\begin{equation}\label{Fcond}
\sup_{\rho(x)=t}F(x)=o\left(\frac{f^{\prime}_a (t)}{f_a(t)}t^{-\ve-1}\right)
\end{equation} 
for some $\epsilon>0$ as $t\to\infty$. Then there 
 exist two positive constants $\delta\in(0,\min(\delta_1,\ve))$ and $R_3$ 
depending on $C$ and $\varepsilon$ such that the function 
 $\psi=A(R_3^{\delta} \rho^{-\delta} + h)$ satisfies $\tilde{Q}[\psi]< 0$
 in the set $3\Omega\setminus \bar{B}(o,R_3)$.
\end{lem}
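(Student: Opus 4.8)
The plan is to expand $\tilde{Q}[\psi]$ in divergence form and exhibit the term coming from $\rho^{-\delta}$ as the dominant, negative one, once $R_3$ is large and $\delta$ small. Write $\psi=Au$ with $u=R_3^\delta\rho^{-\delta}+h$ and put $w=\sqrt{1+\abs{\n\psi}^2}=\sqrt{1+A^2\abs{\n u}^2}$. A direct computation (as in the derivation of \eqref{operQ}) gives
\[
\tilde{Q}[\psi]=\frac{A}{w}\Delta u-\frac{A^3}{w^3}\Hess u(\n u,\n u)+F,
\]
which we rewrite as $\tfrac{A}{w}\bigl(\Delta u-\tfrac{A^2}{w^2}\Hess u(\n u,\n u)\bigr)+F$. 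First I would record that, on $3\Omega\setminus\bar{B}(o,R_3)$ with $R_3\ge R_2$, Lemma~\ref{perusta} and the elementary inequality $f_a(t)\ge t$ give $\abs{\n h}\le c_2/(f_a\circ\rho)\le c_2/\rho$ and $\abs{\n(R_3^\delta\rho^{-\delta})}=\delta R_3^\delta\rho^{-\delta-1}\le\rho^{-1}$, so $\abs{\n\psi}\le A(1+c_2)/R_3$ and $w\to1$ uniformly as $R_3\to\infty$. Fixing $R_3\ge R_2$ so large that $1\le w\le 3/2$ on $3\Omega\setminus\bar{B}(o,R_3)$, it then suffices to prove the pointwise bound
\[
\Delta u-\frac{A^2}{w^2}\Hess u(\n u,\n u)<-\frac{2F}{A}
\]
on that set.

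Next I would isolate the main term $R_3^\delta\Delta(\rho^{-\delta})$. Using $\Delta(\rho^{-\delta})=-\delta\rho^{-\delta-1}\Delta\rho+\delta(\delta+1)\rho^{-\delta-2}$, the Laplacian comparison theorem applied with the upper curvature bound $K\le-(a\circ\rho)^2$, which yields $\Delta\rho\ge(n-1)f_a'(\rho)/f_a(\rho)$, and the asymptotic behaviour of $f_a$ forced by \eqref{A1} (which in particular gives $\rho\,f_a'(\rho)/f_a(\rho)\ge\phi_1-o(1)$ as $\rho\to\infty$, with $\phi_1>1$), I obtain, for $\rho$ large,
\[
R_3^\delta\Delta(\rho^{-\delta})\le-\delta R_3^\delta\rho^{-\delta-1}\Bigl((n-1)\frac{f_a'(\rho)}{f_a(\rho)}-(\delta+1)\rho^{-1}\Bigr)\le-c_0\,R_3^\delta\,\rho^{-\delta-1}\frac{f_a'(\rho)}{f_a(\rho)}
\]
with a constant $c_0>0$, because $\delta<\delta_1\le\bigl(-1+(n-1)\phi_1\bigr)/\bigl(1+(n-1)\phi_1\bigr)$ guarantees in particular $(n-1)\phi_1>\delta+1$. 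This is the term that will beat all the others.

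It remains to bound the other contributions as genuinely of lower order. The $h$-part of $\Delta u$ is controlled by $\abs{\Delta h}\le(n-1)\norm{\Hess_x h}\le(n-1)c_2\,\rho^{-C_4-1}f_a'(\rho)/f_a(\rho)$ from Lemma~\ref{perusta}, which is $o\bigl(\rho^{-\delta-1}f_a'(\rho)/f_a(\rho)\bigr)$ since $\delta<C_4$. For the Hessian term one uses $\Hess\rho\ge0$ to get $\Hess(\rho^{-\delta})(\n u,\n u)\ge-\delta\rho^{-\delta-1}\Hess\rho(\n u,\n u)\ge-\delta\rho^{-\delta-1}\norm{\Hess\rho}\,\abs{\n h}^2$, and, together with $\abs{\Hess h(\n u,\n u)}\le\norm{\Hess h}\abs{\n u}^2$, the bounds of Lemma~\ref{perusta}, and $\norm{\Hess\rho}\le(n-1)f_b'(\rho)/f_b(\rho)$ from the lower curvature bound (estimated via \eqref{A3}, \eqref{A4}, \eqref{A6}), the whole term $-\tfrac{A^2}{w^2}\Hess u(\n u,\n u)$ is bounded above by a quantity that is $o\bigl(\rho^{-\delta-1}f_a'(\rho)/f_a(\rho)\bigr)$. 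Finally, assumption \eqref{Fcond} together with $\delta<\varepsilon$ gives $F(x)=o\bigl(\rho^{-\varepsilon-1}f_a'(\rho)/f_a(\rho)\bigr)=o\bigl(\rho^{-\delta-1}f_a'(\rho)/f_a(\rho)\bigr)$. Collecting everything, the left-hand side of the desired pointwise inequality is at most $-c_0R_3^\delta\rho^{-\delta-1}f_a'(\rho)/f_a(\rho)+o\bigl(\rho^{-\delta-1}f_a'(\rho)/f_a(\rho)\bigr)$, which is negative, and in fact $<-2F/A$, once $R_3$ is enlarged depending only on $C$ and $\varepsilon$.

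The step I expect to be the main obstacle is the bookkeeping in the last paragraph: one must check that every error term is uniformly $o\bigl(\rho^{-\delta-1}f_a'(\rho)/f_a(\rho)\bigr)$ over the whole truncated cone $3\Omega\setminus\bar{B}(o,R_3)$, and it is this requirement that pins down the choice $\delta<\min(\delta_1,\varepsilon)$ and forces the curvature conditions \eqref{A1}--\eqref{A7} to be invoked (in particular \eqref{A7} to absorb $\norm{\Hess h}$, and \eqref{A6} with \eqref{A3}--\eqref{A4} to compare $f_b'/f_b$ with $f_a'/f_a$). The subtle case is when $f_a$ grows only polynomially, $f_a(\rho)\sim c\rho^{\phi_1}$, since then $f_a'(\rho)/f_a(\rho)\sim\phi_1/\rho$ and the correction $\delta(\delta+1)\rho^{-\delta-2}$ is of exactly the same order as the main term; this is precisely where the inequality $(n-1)\phi_1>\delta+1$, guaranteed by $\delta<\delta_1$, is indispensable.
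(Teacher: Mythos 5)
Your proposal is correct, and its architecture matches the paper's: $R_3^\delta\Delta(\rho^{-\delta})$ is identified as the dominant negative contribution via the Laplace comparison theorem and the choice $\delta<\delta_1$, the term $F$ is absorbed using $\delta<\varepsilon$ together with \eqref{Fcond}, and the $h$-contributions are absorbed using $\delta<C_4$ together with Lemma~\ref{perusta}. The genuine difference is in the treatment of the second-order ``gradient'' term: the paper estimates $\langle\nabla|\nabla\psi|^2,\nabla\psi\rangle$ directly via the three $\nabla\langle\cdot,\cdot\rangle$-bounds recorded in Lemma~\ref{perusta}, whereas you decompose $\Hess u(\nabla u,\nabla u)$, drop the nonnegative term $\delta(\delta+1)\rho^{-\delta-2}\langle\nabla\rho,\nabla u\rangle^2$, and invoke the Hessian comparison $\|\Hess\rho\|\le f_b'(\rho)/f_b(\rho)$ coming from the lower curvature bound. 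That route is valid but leaves you one extra verification, namely that $(f_b'/f_b)\,|\nabla h|^2=o(f_a'/f_a)$; this does follow from $|\nabla h|\le c_2/f_a$, $f_a(t)\ge t$, and \eqref{A7}, though it is precisely the kind of estimate Lemma~\ref{perusta} pre-packages so the paper never needs to mention $f_b$ explicitly. Two minor points: the Hessian comparison gives $\|\Hess\rho\|\le f_b'/f_b$ without the $(n-1)$ factor (that factor belongs to $\Delta\rho$, not to the operator norm); and forcing $1\le w\le 3/2$ by enlarging $R_3$ makes $R_3$ depend on $A$ as well as on $C$ and $\varepsilon$ --- the same dependence is in fact already hidden in the paper's own estimate $|\nabla\psi|^2\le c\rho^{-2}$, so this is a shared imprecision in the lemma's stated dependence of constants rather than a gap in your argument.
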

\begin{proof}
 In the proof $c$ will denote a positive constant whose actual value may vary even within a line.
 Since
    \begin{align*}
     \tilde{Q}[\psi] &= \frac{\Delta \psi}{\sqrt{1+|\nabla\psi|^2}} - \frac{1}{2} \frac{\ang{\nabla|\nabla\psi|^2,
	\nabla\psi}}{(1+|\nabla\psi|^2)^{3/2}} +F(x) \\
	&= \frac{(1+|\nabla \psi|^2) \Delta\psi + (1+|\nabla\psi|^2)^{3/2} F(x) - \frac{1}{2}
	\ang{\nabla|\nabla\psi|^2,\nabla\psi}}{(1+|\nabla\psi|^2)^{3/2}},
    \end{align*}
it is enough to show that there exist $\delta >0$ and $R_3$
such that
    \begin{equation} \label{PsiBarrierIneq}
     (1+|\nabla \psi|^2) \Delta\psi + (1+|\nabla\psi|^2)^{3/2} F(x) - \frac{1}{2}
	\ang{\nabla|\nabla\psi|^2,\nabla\psi} < 0
    \end{equation}
in the set $3\Omega \setminus \bar{B}(o,R_3)$.
    
First we notice that $\psi$ is $C^{\infty}$-smooth and
\[
\nabla\psi=A\bigl(-R_3^{\delta}\delta\rho^{-\delta-1}\nabla\rho +\nabla h\bigr)  
\]
in $M\setminus\{o\}$.
Lemma \ref{perusta} and our curvature assumption imply that 
$|\nabla h| \le c/\rho$ for $\rho$ large enough, and therefore
    \begin{equation*}
      |\nabla \psi|^2 = (AR_3^\delta)^2 \delta^2 \rho^{-2\delta-2} + A^2|\nabla h|^2 - 2A^2R_3^\delta \delta 
      \rho^{-\delta-1} \ang{\nabla \rho,\nabla h}
      \le c\rho^{-2}
    \end{equation*}
in $3\Omega \setminus \bar{B}(o,R_3)$ for sufficiently large $R_3$. Then, to estimate the term with $\Delta \psi$ in \eqref{PsiBarrierIneq}, we first note that
    \[
      \Delta \psi = AR_3^\delta \big( \delta(\delta+1)\rho^{-\delta-2} -\delta \rho^{-\delta-1}\Delta \rho \big) 
      + A\Delta h.
    \]
Furthermore, for every $\delta\in (0,\delta_1)$, there exists $R_3=R_3(C,\delta)$ such that
    \[
      \Delta \rho \ge (n-1) \frac{f_a' \circ \rho}{f_a\circ \rho}
\ge \frac{(n-1)(1-\delta)\phi_1}{\rho}      
       >0
    \]
whenever $\rho\ge R_3$; see \cite[(3.25)]{HoVa}.
Therefore, using Lemma \ref{perusta}, we obtain
    \begin{align*}
(1+|\nabla \psi|^2)\Delta\psi &\le (1+|\nabla \psi|^2)AR_3^\delta \delta\left( \delta+1 
     - (n-1) \frac{\rho f_a' \circ \rho}{f_a\circ \rho} \right)\rho^{-\delta-2}  \\
&\qquad + (1+|\nabla \psi|^2) Anc_2 \left(\frac{f_a' \circ \rho}{f_a\circ \rho} \right)   
      \rho^{-C_4-1} \\
&\le AR_3^\delta \delta \left( \delta+1  -(n-1) \frac{\rho f_a' \circ \rho}
     {f_a\circ \rho} \right) \rho^{-\delta-2}     \\ 
&\qquad + \left(1+ c\rho^{-2}\right) Anc_2 \left(\frac{\rho f_a' \circ \rho}{f_a\circ \rho}\right)\rho^{-C_4 -2}\\
&= -\left(\frac{\rho f_a' \circ \rho}{f_a\circ \rho}\right)\rho^{-\delta-2}
     \left(AR_3^{\delta}\delta(n-1)-(1+c\rho^{-2})Anc_2\rho^{\delta-C_4}\right)\\
     &\qquad + AR_3^{\delta}\delta(\delta+1)\rho^{-\delta-2}\\
     &\le -c\left(\frac{\rho f_a^{\prime}\circ \rho}{f_a \circ\rho}\right)\rho^{-\delta-2}
    \end{align*}
whenever $\delta\in (0,\delta_1)$ is small enough and $\rho\ge R_3(C,\delta)$.     
These estimates hold since
\[
\delta +1 -(n-1)\frac{\rho f_a^{\prime}\circ\rho}{f_a\circ\rho}\le \delta +1 -(n-1)(1-\delta)\phi_1\le 0
\]
for a sufficiently small $\delta\in (0,\delta_1)$.
 Now taking into account our assumption \eqref{Fcond} we obtain
 \begin{align}
 \begin{split}
  (1+\abs{\nabla\psi}^2) \Delta\psi + (1+\abs{\nabla\psi}^2)^{3/2}F &
 \le  -c\left(\frac{\rho f_a^{\prime}\circ \rho}{f_a \circ\rho}\right)\rho^{-\delta-2} +
 (1+c\rho^{-2})F \label{two-terms-est}\\
 &\le  -c\left(\frac{\rho f_a^{\prime}\circ \rho}{f_a \circ\rho}\right)\rho^{-\delta-2}
\end{split} 
  \end{align}
whenever $\delta\in (0,\min(\varepsilon,\delta_1))$ is small enough and $\rho\ge R_3(C,\delta)$.

It remains to estimate $\abs{\ang{\nabla|\nabla\psi|^2,\nabla\psi}}$ from above. Since
\[
\nabla\psi=AR_{3}^{\delta}\nabla(\rho^{-\delta})+A\nabla h,
\]
we have
\begin{align*}
\nabla|\nabla\psi|^2 &= A^2\nabla\ang{R_3^{\delta}\nabla(\rho^{-\delta})+\nabla h,R_3^{\delta}\nabla(\rho^{-\delta})+\nabla h}\\
&=(AR_3^{\delta})^2\nabla\ang{\nabla(\rho^{-\delta}),\nabla(\rho^{-\delta})} 
+2A^2 R_3^{\delta}\nabla\ang{\nabla(\rho^{-\delta}),\nabla h}
+A^2 \nabla\ang{\nabla h,\nabla h}.
\end{align*}
By Lemma \ref{perusta} we then get
\begin{align}\label{3rd-term-est}
\abs{\ang{\nabla|\nabla\psi|^2,\nabla\psi}} &\le c\rho^{-1}\left(2(\delta AR_3^{\delta})^2(\delta+1)\rho^{-2\delta-3}
+A^2c_2(2R_3^\delta +1)\left(\frac{f_a^\prime \circ\rho}{f_a \circ\rho}\right)\rho^{-C_4 -2}\right)\nonumber\\
&\le c\delta^2 (\delta+1)\rho^{-2\delta-4} + c\left(\frac{\rho f_a^{\prime}\circ\rho}{f_a\circ\rho}\right)\rho^{-C_4 -4}\\
&\le c\left(\rho^{-2\delta-4}+\rho^{-C_4 -4}\right)\frac{\rho f_a^{\prime}\circ\rho}{f_a\circ\rho}.\nonumber
\end{align}
Putting together \eqref{two-terms-est} and \eqref{3rd-term-est} we finally obtain
\[ 
 (1+|\nabla \psi|^2) \Delta\psi + (1+|\nabla\psi|^2)^{3/2} F(x) - \frac{1}{2}
	\ang{\nabla|\nabla\psi|^2,\nabla\psi}
	\le 
-c\left(\frac{\rho f_a^{\prime}\circ \rho}{f_a \circ\rho}\right)\rho^{-\delta-2}<0	
\]
in $3\Omega \setminus \bar{B}(o,R_3)$ for a sufficiently small $\delta>0$ and large $R_3$.
\end{proof}
Similarly, we have
\begin{equation}
\label{subsol}
\dv\frac{\nabla(-\psi)}{\sqrt{1+\abs{\nabla(-\psi)}^2}} - F(x)>0
\end{equation}
in $3\Omega\setminus \bar{B}(o,R_3)$.

\subsection{Uniform height estimate} 
We will solve the asymptotic Dirichlet problem by solving the problem first in a sequence of
balls with increasing radii. In order to obtain a converging subsequence of solutions, we need
to have a uniform height estimate. This subsection is devoted to the construction of a barrier function
that will guarantee the height estimate.

Since $f_a''-a^2f_a=0$, where $a(t)=0$ for $t\in [0,T_0]$ and 
\[
a(t)\ge \frac{\sqrt{\phi(\phi-1)}}{t}
\] 
for $t\ge T_1$ and some $\phi>1$, we have $f_a(t)\ge ct^\phi$ for $t\ge T_1$. Therefore
    \begin{equation}\label{int_psi_finite}
      \int_1^\infty \frac{dr}{f_a^{n-1}(r)} < \infty.
    \end{equation}
Let $\varphi \colon M \to \R$ be a bounded function. We aim to show the existence of a barrier function
$V$ such that $\tilde{Q}[V] \le 0$ and $V(x)>||\varphi||_\infty$ in $M$.  
In order to define such a function $V$, we need an auxiliary function $a_0>0$, so that 
    \begin{equation}\label{HP1}
      \int_1^\infty \left( \int_r^\infty \frac{ds}{f_a^{n-1}(s)} \right) a_0(r)f_a^{n-1}(r) dr <\infty.
    \end{equation}
We will discuss about the choice of $a_0$ in Examples \ref{a_0_exam1} and \ref{a_0_exam2}.
Now, following \cite{mastrolia2015elliptic}, we can define 
    \begin{equation} \label{Vdefinition}\begin{split}
      V(x) = V\big(\rho(x)\big) &= \left(\int_{\rho(x)}^\infty \frac{ds}{f_a^{n-1}(s)}\right) \left(\int_0^{\rho(x)} a_0(t)
      f_a^{n-1}(t) dt \right) \\ &\quad - \int_0^{\rho(x)} \left( \int_t^\infty \frac{ds}{f_a^{n-1}(s)}\right) a_0(t) 
      f_a^{n-1}(t) dt - H + ||\varphi||_\infty,\end{split}
    \end{equation}
 where 
      \begin{equation}\label{Hdefinition}\begin{split}
	H\coloneqq \limsup_{r\to\infty} \Big\{ \int_{r}^\infty &\frac{ds}{f_a^{n-1}(s)} \int_0^{r} a_0(t)
      f_a^{n-1}(t) dt  \\ &- \int_0^{r}  \int_t^\infty \frac{ds}{f_a^{n-1}(s)} a_0(t) 
      f_a^{n-1}(t) dt \Big\} \le 0;\end{split}
      \end{equation}
see  \cite[(4.5)]{mastrolia2015elliptic}.    
From \eqref{int_psi_finite} and \eqref{HP1} we see that $H$ is finite and hence $V$ is well defined.
      
As in the proof of Lemma \ref{PsiBarrierLemma}, we write 
\begin{equation}\label{Qfraction}
     \tilde{Q}[V] = \frac{(1+|\nabla V|^2) \Delta V + (1+|\nabla V|^2)^{3/2}F(x) - \frac{1}{2}
  \ang{\nabla |\nabla V|^2, \nabla V}}{(1+|\nabla V|^2)^{3/2}},
   \end{equation}
where $F(x)$ is as in \eqref{defF}, and estimate the terms of the numerator. To begin, we notice that
    \begin{align*}
      V'(r) &= - \frac{1}{f_a^{n-1}(r)} \int_0^r a_0(t) f_a^{n-1}(t) dt < 0,\\
      V''(r)&= (n-1) \frac{f_a^\prime(r)}{f_a^n(r)} \int_0^r a_0(t) f_a^{n-1}(t) dt - a_0(r),\\
\noalign{and}  
      \left|\nabla V\big(\rho(x)\big)\right| &= \left| V'\big(\rho(x)\big) \nabla \rho(x)\right| 
      = \left|V'\big(\rho(x)\big)\right|. 
       \end{align*}
Note that $-V(r)=g(r)$, the function \eqref{gV} in Introduction.
The Laplace comparison theorem implies that
    \[
      \Delta \rho \ge (n-1)\frac{f_a^\prime\circ  \rho}{f_a \circ \rho}.
    \]
Hence we can estimate the Laplacian of $V$ as
\begin{align*}
      \Delta V &= V''\big(\rho\big) + \Delta \rho V'\big(\rho\big) \\ 
      &\le V''(\rho) + (n-1)\frac{f_a^\prime(\rho)}{f_a(\rho)}V'(\rho) \\
      &= (n-1) \frac{f_a^\prime(\rho)}{f_a^n(\rho)} \int_0^\rho a_0(t) f_a^{n-1}(t) dt - a_0(\rho) 
       - (n-1)\frac{f_a^\prime(\rho)}{f_a^n(\rho)}
      \int_0^\rho a_0(t) f_a^{n-1}(t) dt \\
      &= -a_0(\rho), 
    \end{align*}
  and thus the first term of \eqref{Qfraction} can be estimated as
\[ 
     \big(1 + |\nabla V|^2\big) \Delta V \le -\big(1 + |\nabla V|^2\big)a_0(\rho)
     \le -\big(1+V'(\rho)^2\big) a_0(\rho).  
\]     
Then, for the last term of \eqref{Qfraction} we have 
 \begin{align*}
    -\frac{1}{2}\ang{\nabla|\nabla V|^2,\nabla V} &= -\frac{1}{2}\ang{\nabla (V'(\rho))^2, V'(\rho) \nabla \rho} = -\frac{1}{2}\ang{2V'(\rho) V''(\rho) \nabla \rho,
  V'(\rho)\nabla \rho} \\
  &= -\big(V'(\rho)\big)^2 V''(\rho) \\
  &= \frac{-1}{f_a^{2n-2}(\rho)} \left(\int_0^\rho a_0(t) f_a^{n-1}(t) dt \right)^2 \\
      &\qquad \cdot \left((n-1) \frac{f_a^\prime(\rho)}{f_a^n(\rho)}
	  \int_0^\rho a_0(t) f_a^{n-1}(t) dt - a_0(\rho) \right) \\
  &= \frac{a_0(\rho)}{f_a^{2n-2}(\rho)} \left(\int_0^\rho a_0(t) f_a^{n-1}(t) dt \right)^2 \\
     &\qquad  -\frac{(n-1)f_a^\prime(\rho)}{f_a^{3n-2}(\rho)} \left(\int_0^\rho a_0(t) f_a^{n-1}(t) dt \right)^3\\
   &= a_0(\rho) V'(\rho)^2 - (n-1) \frac{f_a^\prime(\rho)}{f_a(\rho)} \big( -V'(\rho) \big)^3.
   \end{align*}
 Collecting everything together, we obtain that $\tilde{Q}[V] \le 0$ if
   \begin{align*}
      \sup_{\p B(o,r) \times\R} |\bar\nabla f| \le  \frac{a_0(r) + (n-1) 
	\frac{f_a^\prime(r)}{f_a(r)}\big( -V'(r) \big)^3}{\big( 1+V'(r)^2 \big)^{3/2}}.
    \end{align*}

Finally it is easy to check that, 
since $H$ is finite and $V$ is decreasing, we have
$V(x)>||\varphi||_\infty$ for all $x\in M$ and $V(x) \to ||\varphi||_\infty$ as $\rho(x) \to \infty$.
Altogether, we have obtained the following.

\begin{lem}\label{UniformBoundLem}
 Let $\varphi \colon M \to \R$ be a bounded function and assume that the function $V$ defined in \eqref{Vdefinition} satisfies 
     \begin{align}\label{grad_fCond}
       \sup_{\p B(o,r) \times\R} |\bar\nabla f| \le \frac{a_0(r) + (n-1) 
	\frac{f_a^\prime(r)}{f_a(r)}\big( -V'(r) \big)^3}{\big( 1+V'(r)^2 \big)^{3/2}}.
    \end{align}
Then the function $V$ is an upper barrier for the Dirichlet problem such that
\begin{equation}\label{Vsuper}
       \tilde{Q}[V] =\dv\frac{\nabla V}{\sqrt{1+\abs{\nabla V}^2}} + F(x)\le 0 \quad \text{in } M,
      \end{equation}
      \begin{equation}\label{Vheight}
       V(x) > ||\varphi||_\infty \quad \text{for all } x\in M 
      \end{equation}
and
      \begin{equation}\label{Vlimes}
       \lim_{r(x)\to\infty} V(x) = ||\varphi||_\infty.
      \end{equation}
Furthermore,
\begin{equation}
\label{-Vsubsol}
\dv\frac{\nabla(-V)}{\sqrt{1+\abs{\nabla(-V)}^2}} - F(x)\ge 0\quad \text{in } M.
\end{equation}
\end{lem}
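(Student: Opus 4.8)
The plan is to assemble the statement from the computations already carried out in this subsection, adding only a short verification of the behaviour of $V$ at infinity and the trivial passage from $V$ to $-V$. First one checks that $V$ is well defined: the inner integral $\int_r^\infty ds/f_a^{n-1}(s)$ is finite by \eqref{int_psi_finite}, the double integral appearing in \eqref{Hdefinition} is finite by \eqref{HP1}, and hence the $\limsup$ defining $H$ is a finite number, so \eqref{Vdefinition} makes sense.

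To obtain \eqref{Vsuper} I would collect the three estimates established above for the numerator on the right-hand side of \eqref{Qfraction}. Since $V$ is radial, $|\nabla V(x)|=|V'(\rho(x))|$, so the term $(1+|\nabla V|^2)^{3/2}F(x)$ equals $(1+V'(\rho)^2)^{3/2}F(x)$; the Laplace comparison theorem gives $(1+|\nabla V|^2)\Delta V\le -(1+V'(\rho)^2)a_0(\rho)$; and the chain rule gives the exact identity $-\tfrac12\langle\nabla|\nabla V|^2,\nabla V\rangle = a_0(\rho)V'(\rho)^2-(n-1)\tfrac{f_a'(\rho)}{f_a(\rho)}(-V'(\rho))^3$. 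Adding these three, the two occurrences of $a_0(\rho)V'(\rho)^2$ cancel and the numerator is bounded above by $(1+V'(\rho)^2)^{3/2}F(x)-a_0(\rho)-(n-1)\tfrac{f_a'(\rho)}{f_a(\rho)}(-V'(\rho))^3$. By the definition \eqref{defF} of $F$ one has $F(x)\le \sup_{\partial B(o,\rho(x))\times\R}|\nb f|$, so hypothesis \eqref{grad_fCond} forces this expression to be $\le 0$; dividing by $(1+|\nabla V|^2)^{3/2}>0$ yields $\tilde Q[V]\le 0$ in $M$.

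For \eqref{Vheight} and \eqref{Vlimes}, observe that $V'(r)=-f_a^{-(n-1)}(r)\int_0^r a_0 f_a^{n-1}\,dt<0$ for every $r>0$, so the radial profile $r\mapsto V(r)$ is strictly decreasing on $(0,\infty)$. Writing $\Phi(r)$ for the bracketed expression in \eqref{Hdefinition}, we have $V(r)=\Phi(r)-H+\|\varphi\|_\infty$, hence $\Phi'=V'<0$; a monotone decreasing function has a limit at infinity equal to its $\limsup$, so $\lim_{r\to\infty}\Phi(r)=H$, and being strictly decreasing to $H$ it satisfies $\Phi(r)>H$ for all $r\ge 0$. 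Translating back, $V(x)>\|\varphi\|_\infty$ for all $x\in M$ and $V(x)\to\|\varphi\|_\infty$ as $\rho(x)\to\infty$.

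Finally, \eqref{-Vsubsol} is immediate: $\nabla(-V)=-\nabla V$ gives $\sqrt{1+|\nabla(-V)|^2}=\sqrt{1+|\nabla V|^2}$, whence $\dv\bigl(\nabla(-V)/\sqrt{1+|\nabla(-V)|^2}\bigr)=-\dv\bigl(\nabla V/\sqrt{1+|\nabla V|^2}\bigr)\ge F(x)$ by \eqref{Vsuper}. There is no real obstacle in this lemma: the analytic content is the Laplace comparison estimate for $\Delta V$ and the curvature-dependent bookkeeping that produced \eqref{grad_fCond}, all of which is already carried out above; the only point that needs a line of care is that the $\limsup$ in \eqref{Hdefinition} is a genuine limit, which follows from the monotonicity of the radial profile of $V$.
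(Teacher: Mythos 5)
Your proposal is correct and follows essentially the same route as the paper: the same Laplace comparison estimate for $\Delta V$, the same exact identity for $-\tfrac12\langle\nabla|\nabla V|^2,\nabla V\rangle$ derived from $V''=-(n-1)\tfrac{f_a'}{f_a}V'-a_0$, and the same cancellation of the $a_0(\rho)V'(\rho)^2$ terms, after which \eqref{grad_fCond} closes the argument for \eqref{Vsuper}. The only addition is your explicit remark that $\Phi'=V'<0$ turns the $\limsup$ in \eqref{Hdefinition} into a genuine limit, a point the paper leaves implicit with the phrase ``since $H$ is finite and $V$ is decreasing.''
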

 
 Next we show by examples that in the situation of Corollaries \ref{thm1} and \ref{HVkor2_RT} the 
 condition \eqref{grad_fCond} is not a stronger restriction than the assumption \eqref{Fcond} in Lemma
 \ref{PsiBarrierLemma}. First note that $V'(r) \to 0$ as $r\to\infty$, and hence the upper 
 bound \eqref{grad_fCond} for $|\nb f|$ is asymptotically the function $a_0$.
 
 \begin{exa}\label{a_0_exam1}
  Assume that the sectional curvatures of $M$ satisfy 
  \[
   K(P_x)\le -a\big(\rho(x)\big)^2 =- \frac{\phi(\phi-1)}{\rho(x)^2}, \quad \phi>1,
  \]
for $\rho(x)\ge T_1$. 
We need to choose the function $a_0$ such that \eqref{HP1} holds, and since this is a 
question about its asymptotical behaviour, it is enough to consider the integral
    \[
      \int_{T_1}^\infty \left( \int_r^\infty \frac{ds}{f_a^{n-1}(s)} \right) a_0(r)f_a^{n-1}(r) dr.
    \]
For $t\ge T_1$, $f_a(t) = c_1 t^\phi + c_2 t^{1-\phi}$,  and hence, by a straightforward computation, we have \eqref{HP1} if 
    \[
     \int_{T_1}^\infty a_0(r)r \, dr < \infty.
    \]
So it is enough to choose for example
    \[
     a_0(r) = O\left(\frac{1}{r^2(\log r)^{\alpha}}\right)
    \]
as $r\to\infty$ for some $\alpha>1$. On the other hand, with this curvature upper bound, the assumption \eqref{Fcond} 
requires decreasing of order $o\bigl(r^{-2-\ve}\bigr)$.
 \end{exa}

 \begin{exa}\label{a_0_exam2}
  Assume that the sectional curvatures of $M$ satisfy 
  \[
    K\le-k^2, 
  \]
for $\rho(x)\ge T_1$ and some constant $k>0$. Then, for large $t$,
$f_a(t) = c_1 \sinh kt+ c_2\cosh kt\approx e^{kt}$. Therefore
it is straightforward to see that we have \eqref{HP1} if
    \[
      \int_{T_1}^\infty a_0(r)\,dr <\infty,
    \]
which holds by choosing, for example, 
    \[
      a_0(r) = O\left(\frac{1}{r(\log r)^\alpha}\right),\ \alpha>1,
    \]
as $r\to\infty$. On the other hand, with this curvature upper bound, the assumption \eqref{Fcond}
requires decreasing of order $o\bigl(r^{-1-\ve}\bigr)$.
 \end{exa}

\subsection{Proof of Theorem \ref{ThmMain}}
We start with solving the Dirichlet problem in geodesic balls $B(o,R)$.
\begin{lem}\label{adp-gb} 
Suppose that $f\in C^2(M\times\R)$ is of the form $f(x,t)=m(x)+r(t)$ and 
satisfies
\[
\sup_{\partial B(o,r)\times\R}\abs{\bar{\nabla}f} \le (n-1)\frac{f_a^\prime (r)}{f_a(r)}
\]
for all $r>0$. Then for every $R>0$ and $\varphi\in C(\partial B(o,R))$ there exists a solution
$u\in C^{2,\alpha}(B(o,R))\cap C(\bar{B}(o,R))$ of the Dirichlet problem
\begin{equation*} 
 \begin{cases}
  \dv \dfrac{\nabla u}{\sqrt{1+|\nabla u|^2}} = \ang{\nb f,\nu} \quad \text{in } B(o,R) \\
  u|\p B(o,R) = \varphi.
 \end{cases}
\end{equation*}
\end{lem}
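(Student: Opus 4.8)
The plan is to follow the proofs of Theorems~\ref{existence} and~\ref{cont_existence} almost verbatim: reduce the problem to the a priori estimates of Section~\ref{sec_estim} and then invoke the Leray--Schauder method, the only new point being that on a geodesic ball the mean-curvature hypothesis required by those estimates is supplied by the Laplace comparison theorem rather than by a lower bound on $\Ric$. Fix $R>0$ and set $\Omega=B(o,R)$. Since $M$ is Cartan--Hadamard, $\exp_o$ is a diffeomorphism, hence $\rho$ is smooth on $M\setminus\{o\}$ and $\partial\Omega=\partial B(o,R)$ is a $C^\infty$ hypersurface; moreover for $x\ne o$ the radial geodesic is the \emph{unique} minimizing geodesic from $x$ to $\partial\Omega$, so $\Omega_0=\Omega\setminus\{o\}$ and the distance to $\partial\Omega$ is $d=R-\rho$, which is $C^\infty$ on $\Omega_0$ (and in particular on a tubular neighbourhood of $\partial\Omega$). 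Its level sets are the geodesic spheres $\partial B(o,t)$, $0<t<R$, so the inward mean curvature of the level set through $x$ is $H(x)=-\Delta d(x)=\Delta\rho(x)$, and by the Laplace comparison theorem and the bound $K\le-(a\circ\rho)^2$,
\[
H(x)=\Delta\rho(x)\ \ge\ (n-1)\frac{f_a'(\rho(x))}{f_a(\rho(x))}\ \ge\ \sup_{s\in\R}\bigl|\nb f(x,s)\bigr|\qquad(x\in\Omega_0),
\]
the last inequality being the hypothesis of the lemma. Thus \eqref{Fxtra} and \eqref{H-f} hold in $\Omega_0$, and a fortiori for $\tau f$ with any $\tau\in[0,1]$.

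For $\varphi\in C^{2,\alpha}(\partial B(o,R))$, extend $\varphi$ to $C^{2,\alpha}(\bar\Omega)$ and run the continuity method on the family $\dv(\nabla u/W)-\tau\ang{\nb f,\nu}=0$ in $\Omega$, $u|\partial\Omega=\tau\varphi$, $\tau\in[0,1]$. By the displayed inequality, Lemmata~\ref{heightestim}, \ref{boundestim} and~\ref{globestim}(b) provide a $\tau$-independent bound $\norm{u}_{C^1(\bar\Omega)}\le C$ for every solution, hence (through the interior and boundary Hölder gradient estimates for quasilinear equations) a $\tau$-independent $C^{1,\beta}$ bound, and \cite[Theorem~13.8]{GilTru} yields a solution $u\in C^{2,\alpha}(\bar B(o,R))$ at $\tau=1$, exactly as in the proof of Theorem~\ref{existence}.

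For $\varphi\in C(\partial B(o,R))$ one argues as in the proof of Theorem~\ref{cont_existence}. Approximate $\varphi$ monotonically from above and below by $\varphi_k^\pm\in C^{2,\alpha}(\partial B(o,R))$ and let $u_k^\pm$ be the solutions just obtained. The height estimate (Lemma~\ref{heightestim}, with constant depending only on $B(o,R)$ and $\norm{\varphi}_\infty$) bounds their $C^0$-norms uniformly in $k$, and Lemma~\ref{globestim}(a) together with interior elliptic regularity gives $C^{2,\alpha}$ bounds, uniform in $k$, on compact subsets of $B(o,R)$; a diagonal argument then extracts a subsequence converging in $C^2_{\loc}$ to a solution $u\in C^{2,\alpha}(B(o,R))$. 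To see that $u$ attains the boundary values, one sandwiches $u_k^\pm$ between comparison functions $v^\pm$ solving $\dv(\nabla v^\pm/W)=\pm F(x)$, where $F(x)=\sup_{s}\abs{\nb f(x,s)}$, with $v^\pm|\partial\Omega=\varphi$; since $F(x)\le H(x)$ in $\Omega_0$, these are produced by the barrier constructions of Section~\ref{sec_estim} together with the comparison principle \cite[Theorem~10.1]{GilTru}, whose hypotheses -- unlike those for the $t$-dependent operator $Q$ -- are met because the right-hand side $\pm F(x)$ is independent of $v$ and $\nabla v$. As in Theorem~\ref{cont_existence}, the ordering $v^-\le u_k^\pm\le v^+$ passes to the limit and forces $u|\partial\Omega=\varphi$, while elliptic regularity gives $u\in C^{2,\alpha}(\Omega)\cap C(\bar\Omega)$.

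I expect the main obstacle to be this last step. In Theorem~\ref{cont_existence} the comparison functions solve the equation with the \emph{constant} right-hand side $\pm F$, $F=\sup_{\bar\Omega\times\R}\abs{\nb f}$, which is legitimate there because Lemma~\ref{meancurvlemma} yields $F\le H(x)$ throughout $\Omega_0$. On a ball this may fail -- $\sup_{\bar B(o,R)\times\R}\abs{\nb f}$ can exceed $\inf_{\Omega_0}H=(n-1)f_a'(R)/f_a(R)$ -- so one must keep the genuinely pointwise inequality $\sup_s\abs{\nb f(x,s)}\le H(x)$ and work with the $x$-dependent auxiliary equation $\dv(\nabla v/W)=\pm F(x)$. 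Checking that the a priori height and gradient estimates and the comparison principle still apply to that equation (its coefficient $F$ is continuous on $\Omega_0$ and, being of the form $\sqrt{\abs{\nabla m}^2+c^2}$, locally Lipschitz where positive, hence can be regularised if necessary) is the one place where some care beyond a literal transcription of Section~\ref{sec_ex} is needed.
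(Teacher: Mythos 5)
Your proof matches the paper's: identify the level sets of $d=R-\rho$ with geodesic spheres, use the Laplace comparison theorem to get $H(x)=\Delta\rho(x)\ge(n-1)f_a'(\rho)/f_a(\rho)\ge\sup_s|\nb f(x,s)|$ on $\Omega_0=B(o,R)\setminus\{o\}$, and then invoke the Leray--Schauder scheme of Theorem~\ref{existence} for $C^{2,\alpha}$ data and the approximation scheme of Theorem~\ref{cont_existence} for continuous data. The paper's own proof is just a two-sentence reduction to those theorems; your closing remark that the constant barriers $F^\pm=\pm\sup_{\bar\Omega\times\R}|\nb f|$ of Theorem~\ref{cont_existence} must be replaced by the pointwise $\pm F(x)$ (since on a ball $\sup_{\bar B(o,R)\times\R}|\nb f|$ need not lie below $H$ near $\partial B(o,R)$) is a genuine refinement that the paper leaves implicit, and your fix is the right one.
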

\begin{proof}
Assuming first that $\varphi\in C^{2,\alpha}(\partial B(o,R))$ the claim follows by the Leray-Schauder method. Indeed, for each 
$x\in \bar{B}(o,R)\setminus\{o\}$ the inward mean curvature $H(x)$ of the level set
$\{y\in \bar{B}(o,R)\colon d(y)=d(x)\}=\partial B(o,\rho(x))$ satisfies 
\[
H(x)=\Delta\rho(x)\ge (n-1)\frac{f_a^\prime \big(\rho(x)\big)}{f_a\big(\rho(x)\big)}
\ge \sup_{\partial B(o,\rho(x))\times\R}\abs{\bar{\nabla}f}.
\] 
In other words, \eqref{Fxtra-again} and \eqref{H-f-again} hold and therefore we can apply 
the Leray-Schauder method as in the proof of Theorem~\ref{existence}. The general case
$\varphi\in C(\partial B(o,R))$ follows by approximation as in the proof of 
Theorem~\ref{cont_existence}.
 \end{proof}

\begin{proof}[Proof of Theorem \ref{ThmMain}]
We extend the boundary data function $\varphi \in C(\p_\infty M)$ to a function $\varphi \in C(\bar M)$.
Let $\Omega_k = B(o,k), \, k\in\N$, be an exhaustion of $M$. 
By Lemma~\ref{adp-gb}, there exist solutions $u_k\in C^{2,\alpha}(\Omega_k)\cap C(\bar\Omega_k)$ to
\begin{equation*}
 \begin{cases}
Q[u_k]=  \dv \dfrac{\nabla u_k}{\sqrt{1+|\nabla u_k|^2}} - \ang{\nb f,\nu_k} \quad \text{in } \Omega_k \\
  u_k|\p\Omega_k = \varphi,
 \end{cases}
\end{equation*}
where $\nu_k$ is the downward pointing unit normal to the graph of $u_k$.
Applying the uniform height estimate, Lemma~\ref{UniformBoundLem}, 
we see that the sequence $(u_k)$ is uniformly bounded and hence the
interior gradient estimate (Lemma \ref{globestim}), together with the diagonal argument, implies that 
there exists a subsequence, still denoted by $u_k$, that converges locally uniformly
with respect to $C^2$-norm to a solution $u$. Therefore we are left to prove that $u$ extends continuously
to $\pinf M$ and satisfies $u|\pinf M = \varphi$.

Towards that end let us fix $x_0\in\pinf M$ and $\ve>0$. Since the boundary data function $\varphi$ is continuous, we find
$L\in(8/\pi,\infty)$ such that 
    \[
      |\varphi(y) - \varphi(x_0)| < \ve/2
    \]
for all $y \in C(v_0,4/L)\cap \pinf M$, where $v_0=\dot\gamma_0^{o,x_0}$ is the initial vector of the geodesic
ray representing $x_0$. Moreover, by \eqref{Vlimes} we can choose $R_3$ in Lemma~\ref{PsiBarrierLemma} so large that 
$V(r)\le \max_{\bar{M}} |\varphi| + \ve/2$ for $r\ge R_3$.

We claim that
    \begin{equation}\label{squeeze_ineq}
      w^-(x) \coloneqq -\psi(x) + \varphi(x_0) - \ve \le u(x) \le w^+(x) \coloneqq \psi(x) + \varphi(x_0) + \ve
    \end{equation}
in the set $U\coloneqq 3\Omega \setminus \bar{B}(o,R_3)$, where $\psi=A(R_3^\delta \rho^{-\delta} + h)$ is the 
supersolution $\tilde{Q}[\psi]<0$ in Lemma \ref{PsiBarrierLemma} and $A= 2\max_{\bar M}|\tilde\varphi|$. 
Recall the notation $\Omega = C(v_0,1/L) \cap M$ and $\ell\Omega = C(v_0,\ell/L) \cap M,\  \ell>0$,
from Subsection \ref{subsec_barrier_constr}. 

The function $\varphi$ is continuous in $\bar{M}$ so there exists $k_0$ such
that $\p\Omega_{k_0} \cap U \neq \emptyset$, and
    \begin{equation}\label{tilde_phi-phi}
      |\varphi(x) - \varphi(x_0)| < \ve/2
    \end{equation}
for all $x \in \p\Omega_k \cap U$ when $k\ge k_0$. Denote $V_k = \Omega_k \cap U$ for $k\ge k_0$. We will conclude that 
\begin{equation}\label{ineq_in_Vk}
	w^- \le u_k \le w^+
    \end{equation}
in $V_k$ by using the comparison principle for the operator $\tilde Q_k$,
\[
\tilde{Q}_k[v]=\dv \dfrac{\nabla v}{\sqrt{1+|\nabla v|^2}} - \ang{\nb f,\nu_k},
\]
where $\nu_k$ is the downward pointing unit normal to the graph of the solution $u_k$. Notice that
    \[
      \p V_k = (\p\Omega_k \cap \bar U) \cup (\p U \cap \bar\Omega_k).
    \]
Let $x \in \p\Omega_k \cap \bar U$ and $k\ge k_0$. Then \eqref{tilde_phi-phi} and $u_k|\p\Omega_k = \varphi|\p\Omega_k$ imply that
    \[
      w^-(x) \le \varphi(x_0) - \ve/2 \le \varphi(x) = u_k(x) \le \varphi(x_0) + \ve/2 \le w^+(x).
    \]
Moreover, by Lemma \ref{arvio_lause}, we have
    \[
      h|M \setminus \big(2\Omega \cup B(o,R_1)\big) = 1
    \]
and $R_3^\delta \rho^{-\delta} = 1$ on $\p B(o,R_3)$, so
    \[
	\psi \ge A = 2\max_{\bar M} |\varphi|
    \]
on $\p U \cap \bar\Omega_k$. 
By Lemma \ref{UniformBoundLem}, $V$ is a supersolution $\tilde Q[V]\le 0$ and hence
\begin{align*}
      \dv \frac{\nabla V}{\sqrt{1+|\nabla V|^2}} - \ang{\nb f,\nu_k} &\le 
      \dv \frac{\nabla V}{\sqrt{1+|\nabla V|^2}} + F(x)\\
      & =  \tilde Q[V] \le 0\\
      & =\dv \frac{\nabla u_k}{\sqrt{1+|\nabla u_k|^2}} - \ang{\nb f,\nu_{k}}.
\end{align*}
Since $V\ge \max_{\bar M} |\varphi|$ on $\p\Omega_k$, the comparison principle yields $u_k|\Omega_k \le V|\Omega_k,$
and by the choice of $R_3$, we have
    \[
	u_k \le \max_{\bar M} |\varphi| + \ve/2
    \]
in $\Omega_k \setminus B(o,R_3)$.

Altogether, it follows that
    \[
	w^+= \psi + \varphi(x_0) + \ve \ge 2\max_{\bar M} |\varphi| + \varphi(x_0) + \ve 
	\ge \max_{\bar M}|\varphi| + \ve \ge u_k
    \] 
on $\p U\cap \bar\Omega_k$, and similarly $u_k \ge w^-$ on $\p U\cap \bar\Omega_k$. Consequently $w^-\le u_k \le w^+ $
on $\p V_k$. 
By Lemma~\ref{PsiBarrierLemma}, $\tilde{Q}[\psi]<0$, and therefore
\begin{align*}
\tilde{Q}_k[w^+]&=\dv \frac{\nabla w^+}{\sqrt{1+|\nabla w^+|^2}} - \ang{\nb f,\nu_{k}}\\
 &= 
      \dv \frac{\nabla\psi}{\sqrt{1+|\nabla \psi|^2}} - \ang{\nb f,\nu_{k}}\\
       &\le \dv \frac{\nabla\psi}{\sqrt{1+|\nabla \psi|^2}}  + F(x)\\
      & =  \tilde Q[\psi] < 0\\
      & =\dv \frac{\nabla u_k}{\sqrt{1+|\nabla u_k|^2}} - \ang{\nb f,\nu_{k}}
\end{align*}
in $U$. By the comparison principle, $u_k\le w^+$ in $U$. Similarly, using \eqref{subsol} we conclude that
\[
\dv \frac{\nabla w^-}{\sqrt{1+|\nabla w^-|^2}} - \ang{\nb f,\nu_{k}} >
\dv \frac{\nabla u_k}{\sqrt{1+|\nabla u_k|^2}} - \ang{\nb f,\nu_{k}}
\]
in $U$. Hence $u_k\ge w^-$ in $U$ and we obtain \eqref{ineq_in_Vk}. This holds for every $k\ge k_0$ and hence \eqref{squeeze_ineq} follows. 
Finally,
    \[
      \limsup_{x\to x_0} |u(x) - \varphi(x_0)| \le \ve
    \]
since $\lim_{x\to x_0} \psi(x) =0$. Because $x_0\in\pinf M$ and $\ve>0$ were arbitrary, this shows that $u$
extends continuously to $C(\bar M)$ and $u|\pinf M = \varphi$. 
\end{proof}


\end{document}